\newtheorem{thm}{Theorem}[section]
\newtheorem{prop}[thm]{Proposition}
\newtheorem{lem}[thm]{Lemma}
\newtheorem{defi}[thm]{Definition}
\newtheorem{cor}[thm]{Corollary}
\newtheorem{claim}[thm]{Claim}
\newcommand{\covectors}{\ensuremath{\mathcal{L}}}
\newcommand{\M}{\ensuremath{\mathcal{M}}}
\newcommand{\R}{\ensuremath{\mathbb{R}}}
\newcommand{\cross}{\ensuremath{{\rm cross}}}
\newcommand{\osc}{\ensuremath{{\rm osc}}}
\newcommand{\vc}{\ensuremath{{\rm vc}}}
\newcommand{\C}{\ensuremath{{\mathcal{C}}}}
\newcommand{\RS}{\ensuremath{{\rm RS}}}
\newcommand{\Sep}{\ensuremath{{\rm Sep}}}
\renewcommand{\Im}{\ensuremath{{\rm Im}}}
\renewcommand{\L}{\ensuremath{{\mathcal{L}}}}
\newcommand{\T}{\ensuremath{{\mathcal{T}}}}
\newcommand{\A}{\ensuremath{{\mathcal{A}}}}
\renewcommand{\H}{\ensuremath{{\mathcal{H}}}}
\renewcommand{\P}{\ensuremath{{\mathcal{P}}}}
\newcommand{\rank}{\ensuremath{{\rm rank}}}
\begin{document}
\thanksmarkseries{arabic}
\title{Unlabeled sample compression schemes for oriented matroids}
\author{Tilen Marc\thanks{Electronic address: \texttt{tilen.marc@fmf.uni-lj.si}}}
\affil{Faculty of Mathematics and Physics, Ljubljana, Slovenia \\
Institute of Mathematics, Physics, and Mechanics, Ljubljana, Slovenia}

\sloppy
\maketitle

\begin{abstract} 
A long-standing sample compression conjecture asks to linearly bound the size of the optimal sample compression schemes  by the Vapnik-Chervonenkis (VC) dimension of an arbitrary class. In this paper, we explore the rich metric and combinatorial structure of oriented matroids (OMs) to construct proper unlabeled sample compression schemes for the classes of topes of OMs bounded by their VC-dimension.
  The result extends to the topes of affine OMs, as well as to the topes of the complexes of OMs that possess a corner peeling. The main tool that we  use are the solutions of certain oriented matroid programs. 
\end{abstract}

\section{Introduction}\label{sec:intro}

The sample compression schemes were introduced in \cite{littlestone1986relating} as a generalization of the underlying structure of statistical learning algorithms. Their aim is to explore how labeled samples can be compressed, while still being able to reverse (reconstruct) the map. Depending on the sample space, the challenge is to construct a compression scheme of minimal size.

Let $U$ be a finite set, usually called the \emph{universe}, and $\C$ some family of subsets of $U$, sometimes referred to as a \emph{concept class}. It is convenient to view $\C$ as a set of $\{+, -\}$-vectors, i.e.~$\C\subseteq \{+,-\}^U$, and we shall denote with $c_u$ the $u$-th coordinate of $c\in\C$.
A \emph{sample} $s$ is simply an element of $\{+,-,0\}^U$, and its \emph{support} is $\underline{s} = \{u \in U \mid s_u \neq 0\}$. We say that a sample $s$ is \emph{realized} by $\C$ if $s \leq c$ for some $c\in \C$, where $\leq$ denotes the product ordering of elements of $\{+,-,0\}^U$ relative to the ordering $0 \leq +, 0 \leq -$.
For a concept class $\C$, let
$\RS(\C)$ be the set of all samples realizable by $\C$.

\begin{defi}
An unlabeled sample compression scheme of size $k$, for a concept class $\C \subseteq \{+,-\}^U	$, is defined by a (compressor) function 
$$\alpha: \RS(\C) \rightarrow  {U \choose \leq k} $$
and a (reconstructor) function
$$\beta : \alpha(\RS(\C)) \rightarrow \{-, +\}^U,$$
such that for any realizable sample $s\in \RS(\C)$ of $\C$, the following conditions hold: $\alpha(s) \subseteq \underline{s}$ and $s \leq \beta(\alpha(s))$.
\end{defi}
If the reconstructor function maps into $\C$, the compression scheme is known as \emph{proper}, otherwise we call it \emph{improper} \cite{chepoi2021labeled}.

\vspace{7pt}
\noindent
\textit{Example:} Let $U$ be a set of points in $\R^d$. A typical task in machine learning would be to classify the points in $U$ into two groups, based on some properties of the entities these points are describing. Assume that a linear classifier is used, i.e. the points are separated by a linear function (a hyperplane). For the sake of simplicity, let $U=\{p_1, p_2, p_3, p_4\}$, for $p_i\in \R^2$, and assume that all the points are lying on a same line. As Figure \ref{fig:class1} indicates, the space of all possible ways to classify the points with a linear function can be represented by a concept class $\C = \{++++, +++-, ++--, +---, ----, ---+, --++, -+++\}$, while the set of all samples realized by $\C$ is bigger, see Figure \ref{fig:class2} and Table \ref{tab:1}. 
\begin{figure}
\begin{subfigure}{.5\textwidth}
\centering
\includegraphics[scale=0.16]{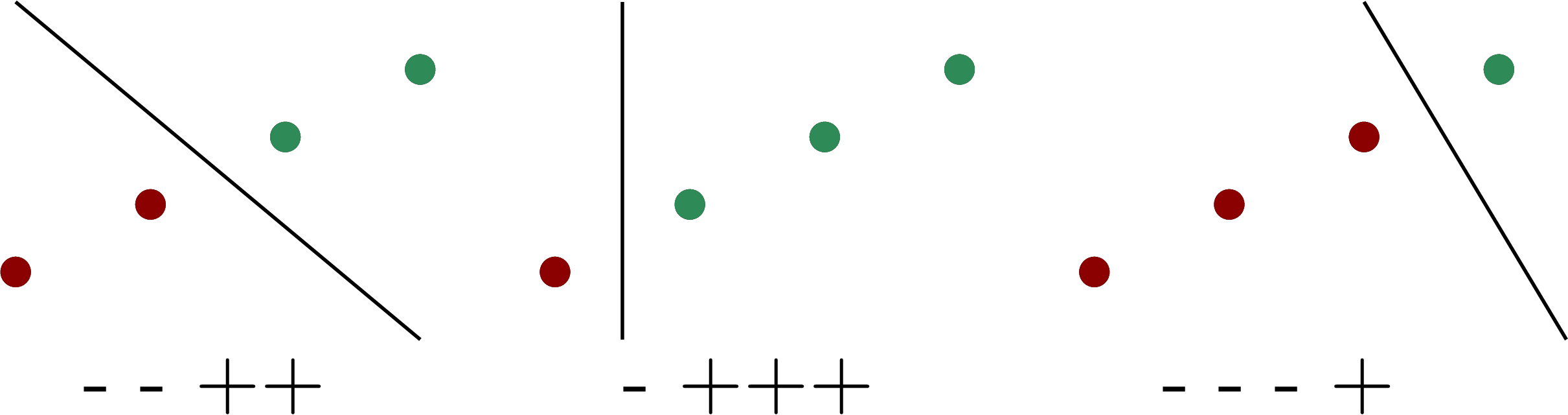}
\caption{Concepts}
\label{fig:class1}
\end{subfigure}
\begin{subfigure}{.5\textwidth}
\centering
\includegraphics[scale=0.16]{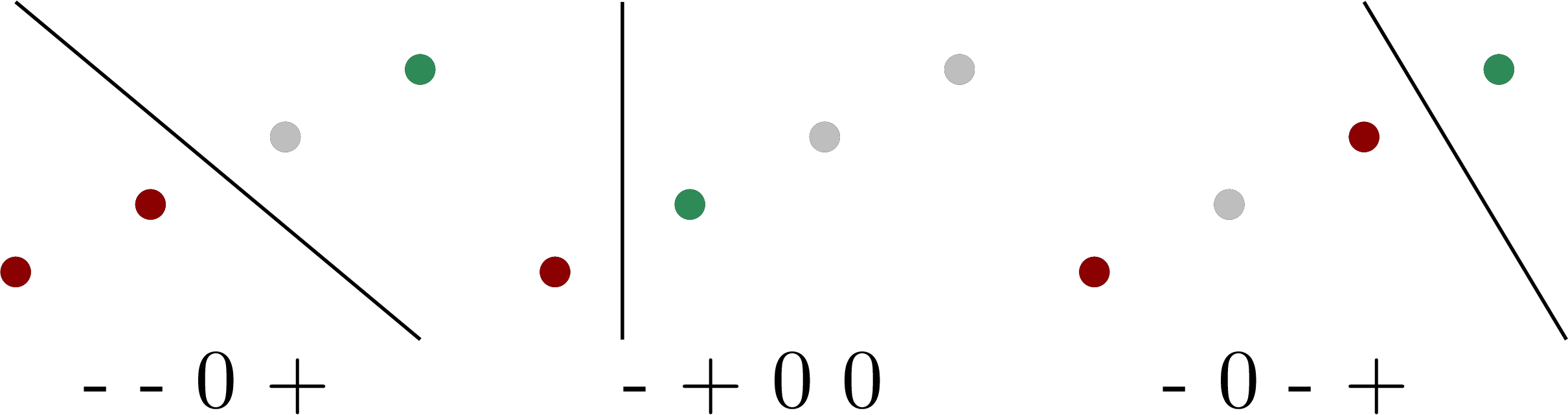}
\caption{Realizable samples}
\label{fig:class2}
\end{subfigure}
\caption{Concepts and samples coming from linear classifiers}
\label{fig:class}
\end{figure}
It seems that the representation of the samples as elements of $\{+,-,0\}^U$ is suboptimal, which would be even more obvious if $U$ had much more than four points. The problem that this paper addresses can be informally stated as: how much can we compress the samples? In this example, each sample in  $\RS(\C)$ comes from a linear function separating the points in $U$, see Figure \ref{fig:class2}, hence we can use these classifiers to compress the samples. For example, one could define a compression scheme that compresses the sample $-0-+$ to two numbers $\{3,4\}$ (and similarly for other samples), since the sample origins from a line separating points $p_3$ and $p_4$ (this approach could be generalized to an arbitrary number of points on a line). In the Table \ref{tab:1} below, we give a proper unlabeled sample compression scheme of size 2 for this simple $\C$, which was derived from the results of this paper. As we will see, $\C$ can be geometrically seen as a set of topes of an oriented matroid, or as vertices of an 8-cycle embedded into a hypercube $\{+, - \}^U$, see Figure \ref{fig:1}. The results of our paper apply to the generalizations of this example, where $U$ can be arbitrarily big with points arbitrarily placed in $\R^d$.

%

{
\begin{table}[hb]
\fontsize{8pt}{9pt}\selectfont
$$\alpha:
\begin{array}{ccccccccccccccccccc}
++++ & \mapsto &  \{1,4\} &  & +++- & \mapsto &  \{3,4\} & & ++-- & \mapsto &  \{2, 3\} &  & +--- & \mapsto &  \{\} &  & ---- & \mapsto &  \{1\} \\

---+ & \mapsto &  \{4\} &  & --++ & \mapsto & \{3\} & & -+++ & \mapsto & \{2\} &  & +++0 & \mapsto & \{1,3\} &  & ++-0 & \mapsto & \{2,3\} \\

+--0 & \mapsto & \{\} &  & ---0 & \mapsto & \{1\} & & --+0 & \mapsto & \{3\} &  & -++0 & \mapsto & \{2\} &  & ++0+ & \mapsto & \{1,4\} \\

++0- & \mapsto & \{2,4\} &  & +-0- & \mapsto & \{\} & & --0- & \mapsto & \{1\} &  & --0+ & \mapsto & \{4\} &  & -+0+ & \mapsto & \{2\} \\

+0++ & \mapsto & \{1,4\} &  & +0+- & \mapsto & \{3,4\} & & +0-- & \mapsto & \{\} &  & -0-- & \mapsto & \{1\} &  & -0-+ & \mapsto & \{4\} \\

-0++ & \mapsto & \{3\} &  & 0+++ & \mapsto & \{2\} & & 0++- & \mapsto & \{3,4\} & & 0+-- & \mapsto & \{2,3\} &    & 0--- & \mapsto & \{\} \\

0--+ & \mapsto & \{4\} &  & 0-++ & \mapsto & \{3\} & & ++00 & \mapsto & \{1,2\} & & +-00 & \mapsto & \{\} & & --00 & \mapsto & \{1\}  \\

-+00 & \mapsto & \{2\} &  & +0+0 & \mapsto & \{1,3\} & & +0-0 & \mapsto & \{\} & & -0-0 & \mapsto & \{1\} & & -0+0 & \mapsto & \{3\}  \\

0++0 & \mapsto & \{2\} &  & 0+-0 & \mapsto & \{2,3\}  & & 0--0 & \mapsto & \{\} & & 0-+0 & \mapsto & \{3\} & & +00+ & \mapsto & \{1,4\}  \\

+00- & \mapsto & \{\} &  & -00- & \mapsto & \{1\}  & & -00+ & \mapsto & \{4\} & & 0+0+ & \mapsto & \{2\} & & 0+0- & \mapsto & \{2,4\}  \\

0-0- & \mapsto & \{\} &  & 0-0+ & \mapsto & \{4\}  & & 00++ & \mapsto & \{3\} & & 00+- & \mapsto & \{3,4\} & & 00-- & \mapsto & \{\}  \\

00-+ & \mapsto & \{4\} &  & +000 & \mapsto & \{\}  & & -000 & \mapsto & \{1\} & & 0+00 & \mapsto & \{2\} & & 0-00 & \mapsto & \{\} \\

00+0 & \mapsto & \{3\} &  & 00-0 & \mapsto & \{\}  & & 000+ & \mapsto & \{4\} & & 000- & \mapsto & \{\} & & 0000 & \mapsto & \{\} \\

\end{array}
$$

$$\beta:
\begin{array}{ccccccccccccccc}
\{\} & \mapsto &  +--- & & \{3\} & \mapsto &  --++ &  & \{1,3\} & \mapsto & ++++ &  & \{2,4\} & \mapsto  &  ++-- \\
\{1\} & \mapsto &  ---- & & \{4\} & \mapsto & ---+ & & \{1,4\} & \mapsto & ++++ &  & \{3,4\} & \mapsto & +++- \\
\{2\} & \mapsto &  -+++ &  & \{1,2\}  & \mapsto & ++++ & & \{2, 3\} & \mapsto  & ++-- &  &  &  &  \\
%
\end{array}
$$

\caption{An example of a proper unlabeled compression scheme of size 2.}
\label{tab:1}
\end{table}
}

We note that \emph{labeled} sample compression schemes (which are not the topic of this paper) map into subsamples instead of ${U \choose \leq k}$, i.e. $\alpha: s \mapsto s'$ such that $s' \leq s$. Any unlabelled compression scheme $\alpha, \beta$ yields a labelled one $\alpha', \beta'$ of the same size in the following way: For any $s\in \RS(\C)$ define $\alpha'(s)$ as the subsample of $s$ with the support exactly $\alpha(s)$, while let $\beta'(\alpha'(s)) = \beta(\alpha(s)) = \beta(\underline{\alpha'(s)})$.
Hence, the unlabelled case is harder.

Consider again $\C$ as a family of subsets of $U$. A subset $X$ of $U$ is \emph{shattered} by $\C$ if for all $Y \subseteq X$ there exists $S \in \C$ such that $S \cap X = Y$. We will denote by $\overline{X}(\C)$ the family of all the subsets of $U$ that are shattered by $\C$. The \emph{Vapnik-Chervonenkis dimension} (VC-dimension)  $\vc(\C)$ of $\C$ is the cardinality of the largest subset of $U$ shattered by $\C$ \cite{vapnik2015uniform}. This well-established measurement can be considered as a complexity measure of a set system. 

A long-standing conjecture of \cite{floyd1995sample} is asking if any set family $\C$ of VC-dimension $d$ has a sample compression scheme of size $O(d)$. The investigation of geometrically structured concept classes lead to many surprising results, and strong bounds on the sizes of their compression schemes were derived \cite{ben1998combinatorial, helmbold1990learning, chalopin2022unlabeled, moran2016labeled, rubinstein2012geometric}. In particular, compression schemes for set families such as maximum concept classes, ample sets, intersection-closed concept classes, etc., were designed.  Deep connections between the structure of metric set families and compression schemes were established. 

Recently, a focus was turned towards Oriented Matroids (OMs) \cite{bjvestwhzi-93, chepoi2022ample, chepoi2021labeled}. These well-studied structures provide a common generalization and a framework for studying properties of many geometric objects such as hyperplane  arrangements, linear programming, convex polytopes, directed graphs, neural codes, etc. Thanks to the Topological Representation Theorem \cite[Theorem 5.2.1]{bjvestwhzi-93}, OMs can be represented as arrangements of pseudo-spheres giving a deep geometrical insight into their structure. Furthermore, OMs can be determined by their topes (see Figure \ref{fig:1}), which can be seen as concept classes. The Complexes of Oriented Matroids (COMs) \cite{Ban-18} further generalize the structure of OMs. 

In \cite{chepoi2022ample}, authors tackled the problem of constructing sample compression schemes by extending a concept class to another one, known to have a (small) sample compression scheme. Their results imply the existence of improper labeled schemes with size linearly bounded by its VC-dimension for certain COMs, as well as improper unlabeled schemes for OMs. 
An alternative, more direct approach was given in \cite{chepoi2021labeled}, constructing proper labeled compression schemes for COMs (and hence also OMs) with size bounded by their VC-dimension. The same bound on the unlabeled sample compression schemes for realizable (by hyperplane arrangements) affine OMs was given in \cite{ben1998combinatorial}.

In the present paper, we extend this line of work by providing proper unlabeled compression schemes for OMs of size bounded by their VC-dimension. We extend this result to COMs that possess corner peelings, answering a question from \cite{chepoi2021labeled}. Similarly to \cite{chalopin2022unlabeled}, we construct the schemes geometrically, where in our case the main tool are solutions to oriented matroid programs.

\section{Preliminary definitions and first results}\label{sec:prelim}

\subsection{OMs and partial cubes}
\noindent
\textbf{OMs: }
We present some basic definitions and facts from the well-established and rich theory of OMs, see \cite{bjvestwhzi-93} for an exhaustive coverage of the topic. For a finite set $U$ of \emph{elements}, we call $\L \subseteq \{+,-,0\}^U$ a \emph{system of sign vectors}. In the theory of OMs, elements of $\L$ are usually called \emph{covectors}.
For $X, Y \in \L$, the \emph{separator} of $X$ and $Y$ is  $\Sep(X, Y) = \{e \in U: X_e = +,Y_e = - \text{ or } X_e = -, Y_e = +\}$. The composition of $X$ and $Y$ is the sign vector $X \circ Y$, where $(X \circ Y )_e = X_e$ if $X_e \neq 0$ and $(X \circ Y )_e = Y_e$ if $X_e = 0$. As in the case of samples, the \emph{support} of a covector $X \in \{+,-,0\}^U$ is $\underline{X} = \{e\in U \mid X_e \neq 0\}  \subseteq U$.

\begin{defi}
An \emph{oriented matroid} is a system of sign vectors $\M=(U,\L)$ satisfying
\begin{itemize}
\item[\textbf{(C)}] $X\circ Y \in  \covectors$  for all $X,Y \in  \covectors$.
\item[\textbf{(SE)}]  for each pair $X,Y\in\covectors$ and for each $e\in  \Sep(X,Y)$ there exists $Z \in  \covectors$ such that
$Z_e=0$  and  $Z_f=(X\circ Y)_f$  for all $f\in U \setminus \Sep(X,Y)$.
\item[\textbf{(Sym)}] $-X\in\covectors$ for all $X\in\covectors$.
\end{itemize}
\end{defi}

We only consider simple systems of sign vectors $\L$, that is, if for each $e \in U , \{X_e : X \in \L\} =
\{+,-,0\}$ (no element is constant) and for all $e \neq f$ there exist $X, Y \in \L$ with  $X_e=X_f$, $Y_e \neq Y_f$ and  $X_e,X_f,Y_e,Y_f \in \{+, -\}$ (no two elements are parallel). The extension of the results developed in this paper to non-simple OMs is trivial.

A common source of OMs comes from central hyperplane arrangements in $\R^d$, see Figure \ref{fig:1}. Covectors $\L$ correspond to all possible  positions of points in the space with respect to the hyperplanes. Hence one can make a correspondence between regions of $\R^d$ and covectors. In fact, this can be generalized, since the Topological Representation Theorem \cite[Theorem 5.2.1]{bjvestwhzi-93} allows to represent all OMs as arrangements of pseudo-spheres in $S^{d-1}$.  We do not need to use the representation theorem directly to derive the results in our paper (hence we will avoid a precise definition), but it helps to illustrate them, as in Figure \ref{fig:1}. 

\begin{figure}
\centering
\includegraphics[scale=0.15]{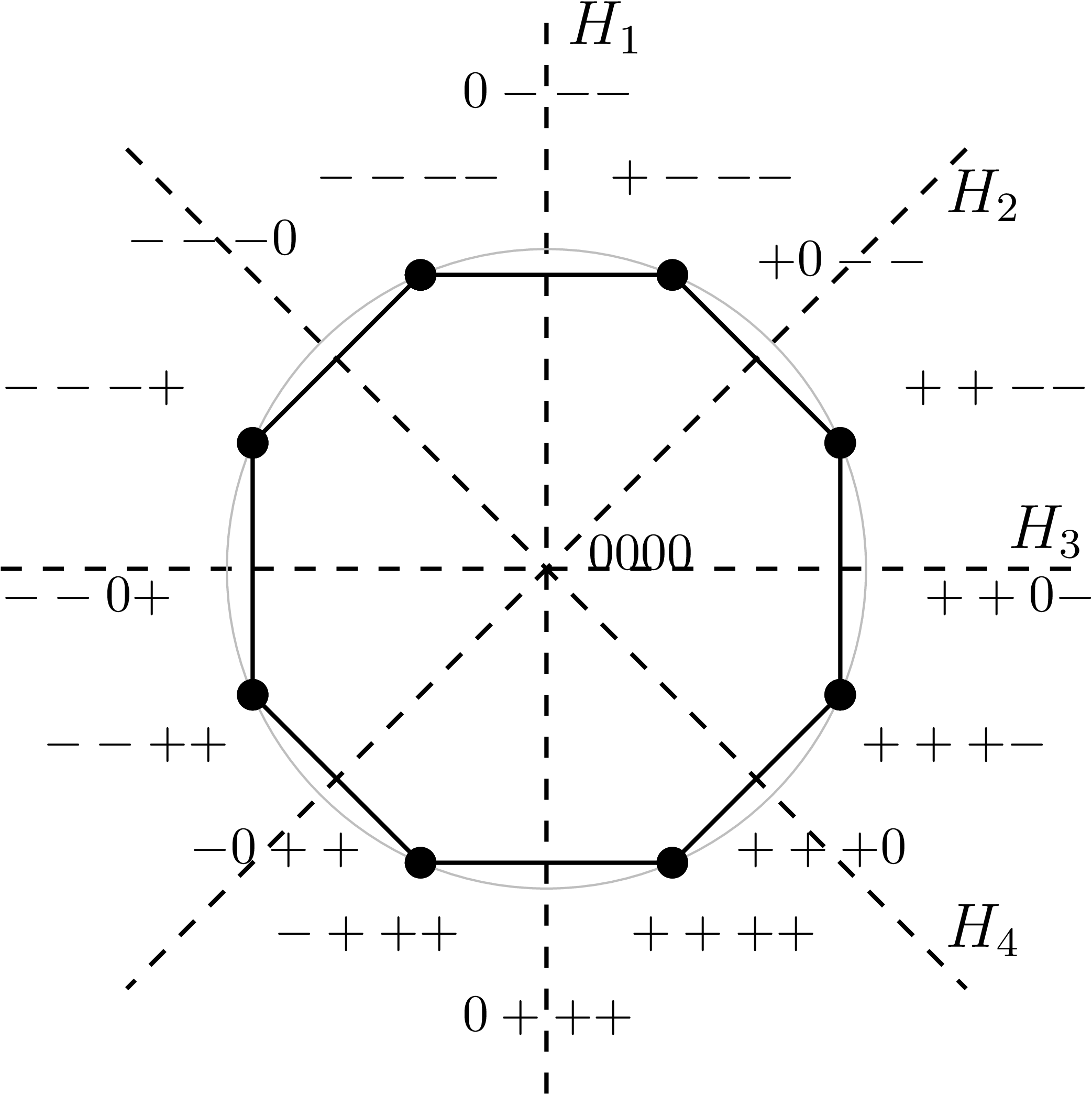}
\caption{Correspondence between sign vectors (covectors) of an OM $\M$ and its topological representation. The tope graph of $\M$ can be seen as a graph embedded in a hypercube with vertices $\{+,-\}^n$.}
\label{fig:1}
\end{figure}

We will use two classic operations on OMs. For $\M=(U,\L)$ with $e \in U$, define the \emph{deletion} as an OM  $\M\backslash e=(U\backslash \{e\},\L')$  with $\L'=\{ X  \backslash e \mid X \in \L\}$, and the \emph{contraction} as an OM  $\M/ e=(U\backslash \{e\},\L'')$ with $\L''=\{ X  \backslash e\ \mid X \in \L, X_e=0\}$, where $X\backslash e$ denotes the element of $\{+,-,0\}^{U\backslash \{e\}}$ with $(X\backslash e)_f=X_f$ for all $f \in U\backslash \{e\}$. It follows from the basic OM theory that both operations in fact result in an OM, see \cite[Section 3.3]{bjvestwhzi-93}. A reverse operation of a deletion is a \emph{single-element extension}, i.e.~$\M$ is a single-element extension of $\M'$ with $e$, if $\M' = \M \backslash e$ for some element $e$.

A \emph{tope} of an OM $\M=(U,\L)$ is defined as a covector with all its coordinates non-zero. We can denote $\T(\M) = \L \cap \{+,-\}^U$ as the set of topes of an OM $\M$. Since the topes are elements of $\{+,- \}^U$, they can  be naturally seen as vertices of the induced subgraph of a hypercube graph $Q_d$, $d=|U|$, whose vertices are elements of $\{+,-\}^U$, and two such vertices adjacent if they differ in exactly one coordinate. The graph obtained in this way is called the \emph{tope} graph of an OM, see Figure \ref{fig:1}; we will denote it with $G(\T(\M))$. It is a basic result that OMs can be reconstructed up to an isomorphism from their tope graphs. With respect to the topological representation, the topes correspond to the  regions that the pseudo-spheres cut $S^{d-1}$ into. For example, in Figure \ref{fig:1} the topes can be seen as 1-dimensional regions of $S^1$ cut-out by the four $S^0$, or equivalently (since the OM is representable), as 2-dimensional regions of $\R^2$ cut-out by the central hyperplane arrangement. In this paper the topes of OMs play a crucial role, since we will be constructing sample compression schemes for classes of topes of OMs.

As with samples, we can impose an ordering $\leq$ on the elements of $\L$ of an OM $\M=(U,\L)$ as the product ordering of elements in $\{+,-,0\}^U$, derived from ordering $0 \leq +$ and  $0\leq -$. The ordering has many nice properties, one of which is that all maximal chains in $(\L, \leq)$ have the same length (number of elements in the chain). This can be used to define the \emph{rank} of an OM, i.e.~$\rank(\M)$, as equal to the length of the maximal chains minus one. As we will see shortly, the rank of an OM corresponds to the VC-dimension of its tope graph. For each covector $X\in \L$ in an OM $\M=(U,\L)$, one can define $\L(X) = \{Y \in \L \mid X \leq Y\}$. Geometrically it can be interpreted as all the covectors of $\M$ corresponding to the regions adjacent to the region of $X$. Going further, we can define $\T(X) = \{Y \in \T(\M) \mid X\leq Y \}$ as the set of all topes in $\L(X)$. It follows from the axioms of OMs, that $\L(X)$ corresponds to an OM, to be more precise $\overline{\L}(X) := \L(X)\backslash \underline{X} = \{Y\backslash \underline{X} \mid X \leq Y\}$ is a set of covectors of an OM, where $\underline{X} = \{e \in U; X_e \neq 0\}$. In fact, due to Axiom (Sym) it could be alternately defined as $\overline{\L}(X) = \L \backslash \underline{X}$, i.e. obtained by deleting all the elements in $\underline{X}$. Then $\T(X)$ induces a tope graph of an OM, that is a subgraph of the tope graph of $\M$. This also allows us to speak of the rank of a covector $X$, meaning the rank of $\overline{\L}(X)$.

The minimal elements of $(\L \setminus \{00\ldots 0\}, \leq)$ are known as \emph{cocircuits}. In particular, for each cocircuit $X$, the set $\T(X)$ induces the tope graph of an OM with the rank one less than the rank of $\M$ (since its maximal chains are exactly one covector shorter).

The first part of the following claim is well known, see \cite[Lemma 13]{chepoi2022ample}. For the sake of completeness we include the proof.
\begin{claim}\label{claim:rankvc}
The rank of a simple OM $\M=(U,\L)$ corresponds to the VC-dimension of its 
tope graph $G(\T(\M))$. If $\rank(\M\backslash e) < \rank(\M)$, for some
element $e\in U$, then there exists cocircuit $X \in \L$, such that the tope
graph $G(\T(\M))$ consists of the induced subgraphs on sets $\T(X), \T(-X)$,
and a matching between them, i.e.~$G(\T(\M))$ is isomorphic to the Cartesian
product of an edge $K_2$ and the graph induced by $\T(X)$.
\end{claim}
\begin{proof}
First, consider the following observation also done in \cite[Proposition 1 in Chapter 7(i)]{Man-82}. If $\rank(\M\backslash e) < \rank(\M)$, for some element $e\in U$, then all the maximal chains in $(\L\backslash e, \leq)$ are shorter than in $(\L, \leq)$. Since we consider simple OMs, there must be a cocircuit $X$ with $X_e \neq 0$ in $\M$. But to shorten a maximal chain that includes the cocircuit $X$ by deleting $e$, it must be that $X\backslash e = 00\ldots 0$, since for all the covectors $Y$, with $X\leq Y$ it holds $Y_e=X_e\neq 0$. This implies that for every tope $T \in \T(\M)$ either holds that $T\geq X$, if $T_e=X_e$, or $T\geq -X$, if $T_e\neq X_e$. In particular, the tope graph $G(\T(\M))$ of $\M$ consists of the induced subgraphs on sets $\T(X), \T(-X)$, and the edges between them. Since $\T(X), \T(-X)$ induce two isomorphic graphs whose topes follow Axiom (Sym), the tope graph $G(\T(\M))$ must be the Cartesian product of the tope graph induced by $\T(X)$ and an edge $K_2$. This proves the second assertion of the claim.

If there exists a cocircuit $X$ with $X_e\neq 0$ and $X\backslash e \neq 00\ldots 0$, then it must be that $\rank(\M\backslash e) = \rank(M)$. Using this argument recursively, one can always delete an element $f$ of $\M$ and have $\rank(\M\backslash f) = \rank(M)$, unless all the cocircuits in $\M$ have exactly one coordinate non-zero. In this case, the tope graph of $\M$ must be isomorphic to $Q_n \cong K_2^n$. Since the OM with the tope graph isomorphic to $Q_n$ has covectors $\L'=\{+,-,0\}^n$, it has rank $n$ (the length of the maximal chains is $n+1$). We have proved the following: if an OM $\M$ has rank $r$, it can be transformed by a sequence of deletions into an OM with the tope graph isomorphic to $Q_r$.

Recall that the $\vc(\C)$ of a class $\C$ is the cardinality of the largest subset of $U$ shattered by $\C$. Translating to the language of $\{+,-\}$ vectors in an OM instead of subsets, it asks to find a subset $S$ of elements $U$ of maximal size, such that deleting all the elements in $U\backslash S$ from an OM $\M$, results in an OM that has the tope graph isomorphic to $Q_{|S|}$. As shown above, this equals the rank of $\M$.
\end{proof}

\vspace{7pt}

\noindent
\textbf{Partial cubes:}
Some results in this paper will extend to a bigger class than the tope graphs of OMs.
\emph{Partial cubes} are graphs that can be isometrically embedded into hypercubes, with respect to the shortest path distance. It is a well-known fact that the tope graphs of OMs are partial cubes, see \cite[Proposition 4.2.3]{bjvestwhzi-93}.
For a partial cube $G$, the isometric embedding into a hypercube is unique up to an isomorphism of the hypercube \cite{Djo-73}, hence we shall consider partial cubes as embedded into $\{0,1\}^U$ and call the coordinates $U$ elements, as in the case of OMs. In fact, all the partial cubes considered in this paper have the embedding naturally defined (for example, inherited from the definition of OMs, in the case of the tope graphs). Since the vertex set of a partial cube $G$, embedded into $\{+,-\}^U$, can be seen as a set system or a concept class, we can define sample compression schemes for it, as well as talk about subsets of $U$ that are shattered by $G$. We will denote with $\overline{X}(G)$ the set of all the subsets of $U$, that are shattered by the vertices of $G$. As in the case of OMs, translating the language of subsets to the language of $\{+,-\}$ vectors, the definition of VC-dimension of a partial cube $G$ embedded into $\{+,-\}^U$  asks to find a subset $S$ of elements $U$ of maximal size, such that deleting all the elements in $U\backslash S$ in $G$, results in a partial cube isomorphic to a hypercube $Q_{|S|}$.

%


For a partial cube $G$, embedded into $\{+,-\}^U$ and an element $e\in U$ we shall denote the two \emph{halfspaces} $H_e^+$ and $H_e^-$ of $G$, defined as all the vertices having the $e$-coordinate $+$ or $-$, respectively. 
A subgraph (or a subset of vertices) $F$ of $G$ is \emph{convex} if for every two vertices $x,y$ of $F$ all the shortest $x,y$-paths lie in $F$.  It is not hard to see that halfspaces of $G$ induce a convex subgraph, and since an intersection of convex subgraphs is convex, also every intersection of halfspaces gives a convex subgraph. In partial cubes the inverse implication is also true: every convex set can be obtained as an intersection of halfspaces \cite{Ban-89}.
For a convex graph $C$ we shall denote  with  $\osc(C)$  the set of the subset of elements of $U$ that \emph{osculate} $C$, i.e.~all the coordinates that the edges connecting $C$ and $G\setminus C$ flip. Similarly, we shall denote with $\cross(C)$  the subset of elements of $U$ that \emph{cross} $C$, i.e.~all the coordinates that the edges in the subgraph  induced by $C$ flip. Since convex sets are the intersections of halfspaces, these two sets are disjoint, and, in fact, a convex set can be represented as the intersection of the halfspaces $C = \bigcap \{H_e^{s_e}\mid e\in \osc(C)\}$, where $s_e$ denotes the sign of the side which includes $C$. We denote with $\H(G)$ the set of all the convex subsets of $G$.





\subsection{Affine OMs, oriented matroid programming, and corners}

\noindent
\textbf{Affine OMs:}
Following \cite[Chapter 10]{bjvestwhzi-93}, we will call an \emph{affine} OM a pair $\A=(\M,g)$, where $\M=(U,\L')$ is an OM and $g\in U$ its element. The set of covectors of $\A$ will be denoted by $\L = \L(\A) = \{X \in \L' \mid X_g = +\}$, i.e. $\A$ can be regarded as a halfspace of $\M$. The topes $\T(\A)$, defined as $\T(\A) = \L \cap \{+,-\}^U$,  correspond to the halfspace $H_g^+$ of the tope graph $G(\T(\M))$. 
Similarly as in OMs, we will refer to the minimal elements of $(\L, \leq)$ as cocircuits. Since the cocircuits of $\A$ are cocircuits of $\M$, they can be used to define $\L(X) = \{Y \in \L \mid X \leq Y\}$. As in the case of OMs, for each cocircuit $X$, deleting the constant elements of $\L(X)$ results in a set of covectors of an OM $\L'(X) := \L(X)\backslash \underline{X} = \{Y\backslash \underline{X} \mid X \leq Y\}$ (also equal to $\L\backslash \underline{X}$ due to Axiom (Sym)), where $\underline{X} = \{e \in U; X_e \neq 0\}$. The rank of $\A$ is again defined as the length of the maximal chains in $(\L,\leq)$. By definition, the rank of $\A$ equals the rank of $\L(X)$, for each of its cocircuits $X$, implying that the rank of $\A$ is one less than the rank of $\M$. The proof of Claim \ref{claim:rankvc} with a minimal modification and also \cite[Lemma 13]{chalopin2022unlabeled} shows, that also the rank of an affine OM $\A$  equals the VC-dimension of its tope graph. Additionally, we define \emph{the plane at infinity} of an affine OM as $\L^{\infty} = \{X\in \L' \mid X_g = 0\}$.

As in the case of OMs, it helps to illustrate these notions using the topological representation, although we shall not use it to derive our results. Since an OM can be represented with pseudo-spheres in $S^{d-1}$ by the Topological Representation Theorem \cite[Theorem 5.2.1]{bjvestwhzi-93}, an affine OM can be seen as a halfspace (half-sphere) of it. Hence each affine OM can be represented as a pseudo-hyperplane arrangement in $\R^{d-1}$, see Figure \ref{fig:2} (for an exact correspondence one would need to be a bit more precise, it suffice to just have an illustration in our case). The cocircuits correspond to the 0-dimensional intersections (points) of pseudo-hyperplanes, topes to the regions of maximal dimension, and $\T(X)$, for a cocircuit $X$, can be seen as the set of all topes whose regions are adjacent to the point corresponding to $X$.

\vspace{7pt}
\noindent
\textbf{OM programming:}
We now turn our attention towards \emph{oriented matroid programming}. Firstly, we define a directed graph  that we will call a \emph{directed cocircuit graph}. Let $\A = (\M,g)$ be an affine OM with rank $d$, for $\M=(U,\L')$ an OM, and $f\in U$ one of its elements, $f\neq g$. The vertices of the  directed cocircuit graph are simply the cocircuits (covectors of rank $d$) of $\A$, while the edges are covectors of rank $d-1$, more precisely two cocircuits $X_1,X_2$ are adjacent if there exists a covector $Y$ such that $X_1< Y, X_2< Y$ and there is no $Z$ with $X_1<Z<Y$ or $X_2<Z<Y$. Some covectors of rank $d-1$ give half-edges in the graph: a cocircuit $X_1\in \L$ has a half-edge if there exists a cocircuit (of $\M$) $X_2\in \L^{\infty}$  and a covector $Y$ such that $X_1< Y, X_2< Y$ (in $(\L', \leq)$) with no $Z$ with $X_1<Z<Y$ or $X_2<Z<Y$ (note that this slightly differs from the standard definition of a cocircuit graph which usually has no half-edges). Finally, we define the orientation of the edges in the above graph with respect to $f\in U$. Let $X_1,X_2$ be adjacent cocircuits. Considering them in $\M$ (i.e. $X_1,X_2\in \L'$), it holds by Axiom (Sym) that also $-X_1\in \L'$. By the Axiom (SE)  applied to $-X_1,X_2$ and the element $g\in U$, there must exist $Z\in \L^{\infty}$ such that $Z_e=(-X_1\circ X_2)_e$  for all $e\in U \setminus \Sep(-X_1,X_2)$. As it turns out (see \cite[Chapter 10]{bjvestwhzi-93} or the below topological intuition) $Z$ is unique and a cocircuit. If $Z_f=+$, orient the edge from $X_1$ to $X_2$, if $Z_f=-$, orient the edge from $X_2$ to $X_1$, and let it be non-oriented if $Z_f=0$. In the case of the half-edges, use $Z=X_2 \in \L^{\infty}$.

The above technical definition has a simpler topological interpretation. On one hand, 0-dimensional intersections of pseudo-hyperplanes, i.e.~points, correspond to cocircuits of $\A$, hence these are the vertices of the cocircuit graph. On the other hand, the 1-dimensional intersections, i.e. pseudo-lines, can be seen as sequences of cocircuits that lie on them. Then two cocircuits $X_1,X_2$ are adjacent if they lie consecutive on a pseudo-line. The edge is oriented from  $X_1$ to $X_2$ if following the line from $X_1$ to $X_2$ leads us to the "point at the infinity" that is on the positive side of the hyperplane $H_f$, and is oriented from  $X_2$ to $X_1$ if it lies on the negative side (equivalently, the line is oriented to cross $H_f$ from the negative to the positive side). Finally, it is not oriented if the line and the hyperplane $H_f$ intersect only in the "point at the infinity", (equivalently, the line does not cross $H_f$). This means that all the edges on a single line are ether undirected, or form a directed path.
For an example, see Figure \ref{fig:2}.



Finally, let  $\A=(\M,g)$ be an affine OM, a halfspace of an OM $\M=(U,\L')$, and $f\in U$ one of its elements, $f\neq g$. For $S\in \{+,-\}^V, V\subseteq U$, define the \emph{polyhedron} $\P(S)$ as $\P(S) = \{X \in \L \mid X_e = \{S_e,0\} \text{ for all } e\in V\}$. Topologically a polyhedron corresponds to the intersection of closed halfspaces. 
The cocircuits of $\A$ that lie in $\P(S)$ induce a subgraph in the directed cocircuit graph of $\A$ with respect to the orientation defined by $f$. We call this graph the \emph{graph of a program}. The task of the \emph{oriented matroid (OM) program} $(\M,g,f,P(S))$ is to find a cocircuit $X$ of $\A=(\M,g)$ that lies in the polyhedron $\P(S)$ and has no in-edges in the graph of the program, oriented with the respect to $f$. We call such $X$ an \emph{optimal} solution to the OM program. See again Figure \ref{fig:2}, for an example.

One of the main results concerning OM programming is the following. For our purposes will say that a polyhedron $P(S)$ of an affine OM $\A$ is \emph{bounded} in the direction of $f$, if there are no in-directed half-edges in the graph of the program of $P(S)$ (this is a much weaker notion of boundedness than the standard one, see \cite[Definition 10.1.1]{bjvestwhzi-93}, but it suffices for our purpose).

\begin{thm}[Theorem 10.1.13 in \cite{bjvestwhzi-93} ]\label{thm:min}
Let $\A=(\M,g)$ be an affine OM, a halfspace of an OM $\M=(U,\L')$, $f\in U, f\neq g$, and $P(S)$ a non-empty polyhedron for some $S\in \{+,-\}^V, V\subseteq U$, bounded in the direction of $f$. Then the OM program $(\M,g,f,P(S))$ has an optimal solution.
\end{thm}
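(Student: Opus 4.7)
The plan is to run a simplex-style pivoting algorithm on the graph of the program. Starting from some cocircuit $X^{(0)} \in \P(S)$, I would repeatedly walk against an in-edge to a neighbour whose $f$-coordinate is strictly smaller, and stop once no in-edge remains — this terminal cocircuit is, by definition, an optimal solution.

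For the base case, since $\P(S) \neq \emptyset$ I would pick any $Y \in \P(S) \subseteq \L$ and take a minimal element $X^{(0)}$ of the finite poset $(\{X \in \L \mid X \leq Y\}, \leq)$, which is a cocircuit of $\A$. The inequality $X^{(0)} \leq Y$ gives $X^{(0)}_e \in \{Y_e, 0\} \subseteq \{S_e, 0\}$ for every $e \in V$, so the starting cocircuit stays inside $\P(S)$.

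For the pivot step, assuming $X^{(i)}$ has an in-edge $X^{(i+1)} \to X^{(i)}$ in the graph of the program, I note that $X^{(i+1)} \in \P(S)$ by construction and the orientation comes from a cocircuit $Z \in \L^{\infty}$ obtained from the (SE) axiom applied to $-X^{(i+1)}$, $X^{(i)}$ and the element $g$, with $Z_f = +$. Via the Topological Representation Theorem the two cocircuits lie consecutively on a pseudoline whose "point at infinity" is on the positive side of $H_f$, so traversing the edge from $X^{(i)}$ to $X^{(i+1)}$ strictly decreases the real $f$-coordinate of the corresponding point in $\R^d$.

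Termination will be the main subtlety. The boundedness hypothesis forbids in-directed half-edges, so every pivot stays inside the finite set of cocircuits lying in $\P(S)$ instead of escaping to infinity, and the strict decrease of the $f$-coordinate prevents any cocircuit from being revisited; finiteness then forces the process to stop at a cocircuit with no in-edges. If one prefers to avoid the Topological Representation Theorem, the same strict decrease can be enforced combinatorially by fixing a linear order on $U$ and always pivoting along the in-edge whose witness $Z$ minimises that order — an OM analogue of Bland's anti-cycling rule.
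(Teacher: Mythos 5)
The paper does not supply a proof of this statement --- it is cited verbatim as Theorem 10.1.13 of \cite{bjvestwhzi-93} and the reader is referred to Chapter 10 there --- so there is no internal argument to compare yours against. Taken on its own, your sketch sets up the preliminary steps correctly: a minimal element of $\{X \in \L \mid X \leq Y\}$ for any $Y\in\P(S)$ is indeed a cocircuit of $\A$ and stays in $\P(S)$ (polyhedra are down-sets under $\leq$), pivots along in-edges of the graph of the program remain inside $\P(S)$, and boundedness rules out escaping along a half-edge. The gap is in the termination argument.

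The claim that a pivot ``strictly decreases the real $f$-coordinate'' is not available for a general OM: the Topological Representation Theorem produces a pseudosphere arrangement, and the pseudohyperplane associated to $f$ carries only the sign $X_f \in \{+,-,0\}$, not a real-valued height that could strictly decrease. The deeper problem is that the weaker fact you actually need --- that the graph of a bounded feasible program is acyclic, so that \emph{any} greedy in-edge walk halts --- is simply false for non-realizable OMs. The non-Euclidean examples due to Edmonds and Fukuda (see \cite[Section 10.4]{bjvestwhzi-93}) produce bounded, feasible oriented matroid programs whose directed cocircuit graph contains directed cycles, and a naive walk on such an instance can loop forever while never revisiting the same $\{+,-,0\}$-signature of $f$. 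This non-Euclidean phenomenon is precisely what makes Theorem 10.1.13 a genuine theorem rather than a routine finiteness argument, and your proposal assumes it away. The fallback to ``an OM analogue of Bland's anti-cycling rule'' names the right object but does none of the work: in the OM setting Bland-type rules are not justified by a decreasing potential (the objective does not strictly drop), their finiteness proofs are among the subtle results of Chapter 10 of \cite{bjvestwhzi-93}, and some natural pivot rules are known to cycle on the non-Euclidean examples. A self-contained proof would have to either reproduce the non-algorithmic argument via OM duality/Farkas and induction on $|U|$, or carry out a full termination proof for a specific pivot rule valid for arbitrary (not just realizable) oriented matroids.
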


Note that the above theorem is a generalization of the standard linear programming result, which specializes for the case when an OM is represented by (straight) hyperplanes. We refer the reader to \cite[Chapter 10]{bjvestwhzi-93} for all the technical details. In this paper, we only need the existence of a solution for a specific OM program that we construct later in Section \ref{sec:res}. Also note that we define the OM program for any polyhedron, while it is more standard to define it for the polyhedron obtained as the intersection of all positive halfspace in an affine OM. The equivalence by reorientation is trivial. 

\begin{figure}
\centering
\includegraphics[scale=0.35]{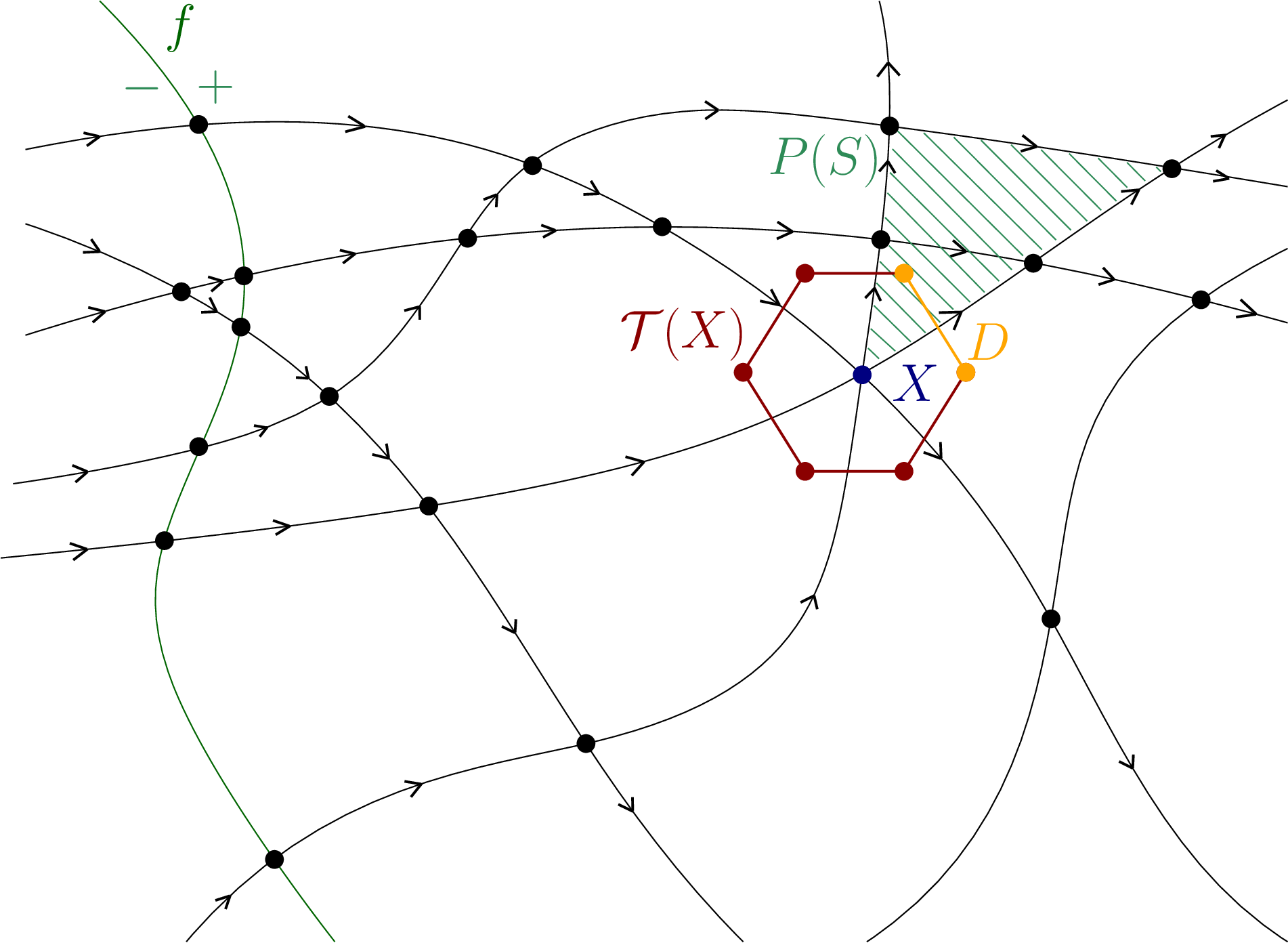}
\caption{The orientation of lines in a rank 2 affine OM $\A$ with respect to a pseudo-hyperplane $f$ resulting in a graph on cocircuits of $\A$. Cocircuit $X$ is an optimal solution to the OM program with respect to polyhedron $P(S)$ (intersection of half-spaces) in direction of $f$. Furthermore, $D$ is a corner of the tope graph induced by $\T(X)$, such that $\T(X) \cap P(S) \subseteq D$, see Lemma \ref{lem:orient_corner}.}
\label{fig:2}
\end{figure}

\vspace{7pt}
\noindent
\textbf{Corners:}
For an element $e\in U$ of  an OM $\M'=(U,\L)$ we will say that it is in \emph{general position}, if none of the $\{ X  \backslash e \mid X \text{ a cocircuit of } \L, X_e=0\}$ is a cocircuit in $\M = \M' \backslash e$. Equivalently, the sets $\{ X  \backslash e \mid X \text{ a cocircuit of } \L, X_e=+\}$ and $\{ X  \backslash e \mid X \text{ a cocircuit of } \L, X_e=-\}$ cover all the cocircuits of $\M' \backslash \{e\}$. Topologically, this can be seen as all the points  on the pseudo-sphere $H_e$,  that correspond to  cocircuits, are no longer corresponding to cocircuits after deleting $H_e$.
Thinking of $\M'$ as a single-element extension of $\M$, we will also say that the extension of $\M$ is in general position, if the corresponding element in the extended OM $\M'$ is in general position.

The last concept that we need to introduce are the \emph{corners} of OMs, first defined in \cite{knauer2020corners}. Let $\M, \M'$ be OMs, where $\M'$ is a single-element extension in general position of $\M$ with element $f\in U$. The two \emph{corners} of $\M$ with respect to the extension with $f$ are the subsets of topes $D = \T(\M) \setminus \{X \backslash f \mid X_f = +, X \in \T(\M')\}$ and $-D=\T(\M) \setminus \{X \backslash f \mid X_f = -, X \in \T(\M')\}$. Note that by the definition $\T(\M)\setminus D = \{X \backslash f \mid X_f = +, X \in \T(\M')\}$ is isomorphic to a halfspace of the tope graph of the OM $\M'$. Hence, by removing a corner from a tope graph of an OM, what remains is the tope graph of an affine OM (isomorphic to the tope graph induced by $\T(\A)$, where $\A=(\M',f)$). We highlight this in the following claim.
\begin{claim}\label{clm:chunk_iso}
Let $D$ be a corner of the tope graph $G$ of an OM. Then $G\backslash D$ is isomorphic to the tope graph of an affine OM.
\end{claim}
Note that $G\backslash D = \{X \backslash f \mid X_f = +, X \in \T(\M')\}$ induces an isometric subgraph of the tope graph of $\M$, see \cite{knauer2020corners} for more details, hence removing a corner does not change the isometric embedding of a graph in a hypercube. 

\vspace{5pt}
\noindent
\textit{Example:} Let $\M$ be an OM with its tope graph isomorphic to a cycle with $2n$ vertices $C_{2n}$ (see the OM in Figure \ref{fig:1}). Every single-element extension in general position with $f$ of this OM results in an OM $\M'$ with its tope graph isomorphic to $C_{2(n+1)}$. Then a corner $D$ of $G(\T(\M))$ consists of $n-1$ consecutive vertices on the cycle. Removing these vertices results in a path of $n+1$ vertices, isomorphic the tope graph of an affine OM $\A=(\M',f)$. Note that such a path is a maximal subgraph of $C_{2n}$ that is isometric. On the other hand, if the tope graph of $\M$ is  isomorphic to $Q_n$, it is not hard to see by analyzing single-element extensions in general position, that a corner of it must be a single vertex. Another example of a corner is given in Figure \ref{fig:3a}.
\vspace{5pt}

%
%

%


In the following, we will establish a connection between corners and solutions to OM programs. We shall use these results later to prove the main theorem. First we need a result from \cite{Man-82}.

\begin{lem}[Theorem 8 and Lemma 11 in Chapter 9(iii) of \cite{Man-82}]\label{lem:mandel_extend}
Let $\A=(\M,g)$ be an affine OM, a halfspace of an OM $\M=(U,\L')$, with $f\in U$ and $X\in \L'$ a cocircutit of $\A$, where $X_f \neq 0$ and $X_g = +$. If $X$ is not in a directed cycle in the cocircuit graph of $\A$ with respect to $f$, then there exists a single-element extension $\A'$ of $\A$ with element $e$, such that:
\begin{itemize}
\item $X$ is extended to a cocircuit $X'$ of $\A'$ with $X'_e=0$.
\item The orientation of the edges in the cocircuit graph of $\A'$ with respect to $e$ is the same as if they were oriented with respect to $f$.
\end{itemize}
  
\end{lem}

A geometric interpretation of the above result is that given a pseudo-hyperplane arrangement representing $\A$, a chosen pseudo-hyperplane $f$ and a cocircit point $X$, one can extend $\A$ to an affine OM with an additional pseudo-hyperplane "parallel" to $f$ and crossing $X$, if the cocircuit graph is acyclic. In the case of affine OMs realizable with (straight) hyperplanes, it is simple to see that this is always possible.

We give certain details about the result in Lemma \ref{lem:mandel_extend} that we will use later, in particular about how the single-element extension of $\A$ to $\A'$ with $e$ is constructed. These details are slightly hidden in \cite{Man-82}, since the existence statement is given in \cite[Theorem 8 in Chapter 9(iii)]{Man-82}, while the construction can be read from the proof of \cite[Theorem 13 in Chapter 9(iii)]{Man-82}. We explain it here in the language of covectors. Let $X$ be a cocircuit of $\A$ and assume that all the edges incident with $X$ in the cocircuit graph of $\A$ are directed. Firstly, $X$ is extended to $X'$ with adding coordinate $e$ where $X'_e=0$. Let $Y\in \L(X)$ be a covector with its rank equal to $\text{rank}(\A) - 1$. Then this covector corresponds to an edge in the directed cocircuit graph with one endpoint $X$. If the edge points towards $X$, the extension of $Y$ is $Y'$ with $Y'_e=-$, otherwise $Y'_e=+$. Furthermore, each covector $Z\in \L(X)$ of rank less than $\text{rank}(\A) - 1$ is extended by the following rule: if each $Y$ with $Y<Z$ and the rank equal to $\text{rank}(\A) - 1$ is extended to $Y'$ with $Y'_e=+$ (resp. $Y'_e=-$), then $Z$ is extended to $Z'$ with $Z'_e = +$ (resp. $Z'_e=-$); otherwise $Z$ is extended to three covectors with the $e$ values $+,-$, and $0$.

The above details will be used in the proof of Lemma \ref{lem:orient_corner}. The statement of the lemma is slightly technical, hence it is illustrated in Figure \ref{fig:2}.

\begin{lem}\label{lem:orient_corner}
Let $\A=(\M,g)$ be an affine OM, a halfspace of an OM $\M=(U,\L)$, and $f\in U$ be in general position in $\M$. Let $X$ be a cocircuit of $\A$, $X_f\neq 0$. Then the OM induced by $\T(X)$ has a corner $D$, such that the following holds: for any polyhedron $P(S)$, $S\in \{+,-\}^V, V\subseteq U$, if $X$ is a solution to the OM program $(\M,g,f,P(S))$, then  $P(S) \cap  \T(X) \subseteq D$. 
%
\end{lem}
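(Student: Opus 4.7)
The plan is to prove the lemma by explicitly constructing a single-element extension of the OM $\overline{\L}(X)$ (whose tope graph is $\T(X)$) and identifying the resulting corner as $D$. Intuitively, the extension plays the role of a pseudo-hyperplane through $X$ parallel to $f$.

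First, I will define the extension via a localization $\sigma$ on the cocircuits of $\overline{\L}(X)$. Each cocircuit $C$ of $\overline{\L}(X)$ lifts uniquely to a covector $Y$ of $\A$ with $X \leq Y$ and $Y \backslash \underline{X} = C$, and $Y$ corresponds to an edge or half-edge at $X$ in the directed cocircuit graph of $\A$. I set $\sigma(C)$ equal to $-$, $+$, or $0$ according to whether this edge is out-oriented, in-oriented, or non-oriented under the orientation induced by $f$; the sign convention is chosen so that the resulting corner contains the desired topes. Antipodality $\sigma(-C) = -\sigma(C)$ is immediate, and using the assumption that $f$ is in general position in $\M$, one verifies the cocircuit elimination axiom for $\sigma$; this yields a valid single-element extension $\M'_X = \overline{\L}(X) \cup \{\tilde f\}$ in general position. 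Let $D$ be the corresponding corner of $\T(X)$, namely $D = \T(\overline{\L}(X)) \setminus \{T \backslash \tilde f : T_{\tilde f} = +,\, T \in \T(\M'_X)\}$; that is, the topes $\bar T$ of $\overline{\L}(X)$ whose ``$+$''-extension is not a tope of $\M'_X$.

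Next, let $T \in \P(S) \cap \T(X)$ and write $\bar T = T \backslash \underline{X}$. I will show that every cocircuit $C$ of $\overline{\L}(X)$ with $C \leq \bar T$ satisfies $\sigma(C) \in \{-, 0\}$. The lifted covector $Y$ satisfies $X \leq Y \leq T$, and because $\P(S)$ is a down-set under $\leq$ we have $Y \in \P(S)$. Hence the edge or half-edge at $X$ determined by $Y$ lies in the graph of the program $(\M, g, f, \P(S))$, and by the optimality of $X$ it is not in-oriented, giving $\sigma(C) \in \{-, 0\}$.

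Finally, I will conclude that the sign vector $(\bar T, \tilde f = +)$ is not a tope of $\M'_X$, so $\bar T \in D$. Any cocircuit $C'$ of $\M'_X$ below $(\bar T, \tilde f = +)$ must satisfy $C'_{\tilde f} \in \{0, +\}$. If $C'$ is the lift of a cocircuit $C$ of $\overline{\L}(X)$, then $C \leq \bar T$ and $C'_{\tilde f} = \sigma(C) \in \{-, 0\}$ by the previous step, forcing $C'_{\tilde f} = 0$. The remaining cocircuits of $\M'_X$ are those newly introduced by the extension, which lie on the pseudo-hyperplane $\tilde f$ and hence also have $\tilde f$-coordinate $0$. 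Consequently every cocircuit of $\M'_X$ below $(\bar T, \tilde f = +)$ has vanishing $\tilde f$-coordinate, and since every tope of $\M'_X$ is a composition of the cocircuits beneath it, this composition has $\tilde f$-coordinate $0 \neq +$, a contradiction. The main obstacle in this plan is the rigorous verification, in the first step, that $\sigma$ satisfies the cocircuit elimination axiom and defines an extension in general position; this rests on the general-position hypothesis on $f$ in $\M$ and amounts to translating into OM-theoretic language the topological idea that $\tilde f$ is a parallel copy of $f$ passing through $X$.
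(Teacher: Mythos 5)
Your proposal follows the same underlying strategy as the paper: read off the orientations (with respect to $f$) of the cocircuit-graph edges incident to $X$ to define a single-element extension of $\overline{\L}(X)$, take the corner $D$ of that extension, and show optimality of $X$ forces $P(S)\cap\T(X)$ into $D$. Steps two and three of your argument (that cocircuits $C\leq \bar T$ lift into $P(S)$ and hence are out-oriented, and that consequently $(\bar T,\tilde f=+)$ cannot be a tope) are sound and essentially track the closing paragraph of the paper's proof. However, the part you flag as the ``main obstacle'' is not a routine verification to be deferred; it is the actual content of the lemma, and as written your proposal does not supply it.

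Concretely, there are two gaps. First, you do not show that $\sigma$ is a valid localization, i.e.\ that it actually defines a single-element extension of $\overline{\L}(X)$. The correct criterion is Las Vergnas' coline condition (restriction of $\sigma$ to every rank-$2$ contraction is a rank-$2$ localization), not a ``cocircuit elimination axiom,'' and it is not immediate from general position of $f$ in $\M$. The paper sidesteps this by a deliberate detour: it first deletes $\underline{X}\setminus\{g,f\}$ to pass to $\A'$, where the image $X'$ of $X$ becomes the \emph{unique} cocircuit with its given $f$- and $g$-signs, which rules out directed cycles through $X'$; this is exactly the hypothesis under which Mandel's extension theorems (Theorems 8 and 13 in Chapter 9(iii) of Mandel) guarantee the desired extension, which is then restricted to $\L(X')\cong\L(X)$. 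Without something playing the role of that deletion-plus-Mandel step, there is no justification that $\sigma$ extends. Second, for the extension to be in general position (and hence define a corner at all), $\sigma$ must never take the value $0$ on a cocircuit of $\overline{\L}(X)$; equivalently, every edge of the cocircuit graph incident to $X$ must be oriented. Your definition explicitly allows $\sigma(C)=0$ for non-oriented edges and you never rule this case out. The paper devotes its opening paragraph to proving precisely this fact (using general position of $f$ together with an (SE)-axiom argument applied to $-X_1,X_2$ and $g$), and it is a necessary ingredient that your proposal omits.
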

\begin{proof}
Firstly, we prove is that if $f$ is in general position, all the edges in the directed cocircuit graph of $\A$ with respect to $f$ are oriented, besides those between cocircuits $X_1,X_2$ with $(X_1)_f=(X_2)_f=0$. Assume on the contrary, that an edge between $X_1,X_2$ is not oriented, hence, by the definition of the orientations, applying (SE) axiom to $-X_1,X_2$ and $g$ yields a unique cocircuit $Z$ with $Z_f=0$. Since $Z_f=0$, it must be that $f\in \Sep(-X_1,X_2)$ or $(X_1)_f=(X_2)_f=0$. Since the latter is one of the cases that we want to prove, assume the former, i.e.~$(X_1)_f=(X_2)_f\neq 0$. Then $X_1 \backslash f, X_2 \backslash f$ are cocircuits in $\M \backslash f$, since the length of the maximal chains including $X_1$, $X_2$ was not shortened by the deletion, and applying (SE) axiom to $-X_1\backslash f,X_2\backslash f$ and $g$ in $\M \backslash f$ gives a unique cocircuit $Z'$. But then $Z' = Z \backslash f$, which is in contradiction with the definition of $f$ being in general position and $Z_f=0$. This proves that the edge must be oriented.

Let $X$ be an arbitrary cocircuit of $\A$ with $X_f\neq 0$ and recall that $\underline{X}  \subseteq U$ denotes the support of $X$. Consider the affine OM $\A' = \A \backslash (\underline{X} \setminus \{g,f\}) := (\M \backslash (\underline{X} \setminus \{g,f\}), g)$ and its directed cocircuit graph with respect to $f$. The image of $X$ under the deletion is  $X'$ with $X'_f=X_f, X'_g=+$, and $X'_e = 0$ for all the other elements $e$ of $U \setminus \underline{X}$. In particular, this implies that $X'$ is the only cocircuit of $\A'$ with $X'_f = X_f, X'_g=+$. Also note that the OM with covectors $\overline{\L}(X)$ is identical to the one with $\overline{\L}(X')$ since only elements with non-zero coordinates in $X$ were deleted.


Now we apply Lemma \ref{lem:mandel_extend} to get a single-element extension of $\A'$ with $e$, such that $X'$ is extended to $X''$ with $X''_e=0$ and the orientation of the lines with respect to $e$ is the same as if they were oriented with respect to $f$. The lemma can be applied, since $X'$ is the only cocircuit of $\A'$ with $X'_f = X_f$, therefore there is no directed cycle in its directed cocircuit graph. We will use this extension to define a corner of $\T(X)$.


Consider the above single-element extension as the extension of $\overline{\L}(X)$. To conclude the proof, we need to prove two facts:
\begin{itemize}
\item The above single-element extension of $\overline{\L}(X)$ with $e$ is in general position, hence defining a corner $D$ of all the topes in $Z \in \T(X)$ that are not expanded to $Z'$ with $Z'_e=-$.
\item The corner $D$ covers $P(S) \cap  \T(X)$. 
\end{itemize}
The rank of the cocircuits of $\overline{\L}(X)$ equals $\text{rank}(\A)-1$, since the rank of $X$ equals $\text{rank}(\A)$. By the construction details of the extension given after Lemma \ref{lem:mandel_extend} and the fact that all the edges incident with $X$ are oriented, each cocircuit $Y$ of $\overline{\L}(X)$ is expanded to $Y'$ with $Y'_e$ equal to $+$ or $-$, depending on whether it corresponds to an out- or an in-edge. Hence the extension is in general position. Finally, every $Y\in P(S) \cap \L(X)$  with rank equal to $\text{rank}(\A) - 1$ (corresponding to the cocircuit of $\overline{\L}(X)$) is expanded to $Y'$ with $Y'_e=+$, since $X$ is a solution of an oriented matroid program with respect to $P(S)$. Hence the same holds for the topes in $P(S) \cap  \T(X)$, by the construction of the expansion, proving that they are in the corner $D$.
%
\end{proof}

\subsection{Reconstructible maps}

We are ready to start analyzing sample compression schemes of OMs. 
In particular, we will give proper unlabeled compression schemes for the concept classes of the topes of OMs. 
We define a map that will help us build sample compression schemes and is closely connected to the \emph{representation} map from \cite{chalopin2022unlabeled}. We define this map for an arbitrary partial cube, since we will later use it for classes beyond OMs. As explained in Section \ref{sec:prelim}, the tope graphs of OMs are partial cubes, with an already defined embedding.

Recall that $\H(G)$ denotes the set of all convex sets in a partial cube $G$ embedded into a hypercube $Q_U$ (equivalently intersections of halfspaces), while $\overline{X}(G)$ are all the subsets of $U$ that $G$ (as a set system) shatters.

\begin{defi}\label{def:reconst}
Let $G$ be a partial cube. We shall say that a map
$$a: \H(G) \rightarrow \overline{X}(G)$$
is a \emph{reconstructible map}, if the following holds:
\begin{enumerate}[(a)]
\item For every convex set $C\in \H(G)$, it holds $a(C) \subseteq \osc(C)$.
\item For every $V \in \Im(a)$, the intersection $\bigcap_{}\{C \mid C\in \H(G), a(C)=V\}$ is non-empty.
\end{enumerate}
\end{defi}

The following connects reconstructible maps with  proper unlabeled compression schemes of small size. Note that the connection between samples and convex sets was also established in \cite{chepoi2021labeled}.

\begin{lem}\label{lem:reconstruct}
If a partial cube $G$ has a reconstructible map, then $G$ (as a set system) has a proper unlabeled compression scheme of size at most $\vc(G)$. 
\end{lem}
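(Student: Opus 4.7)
The plan is to translate realizable samples into convex subsets of $G$, and then feed these convex sets through the reconstructible map $a$ to produce both the compressor $\alpha$ and the reconstructor $\beta$. Given a realizable sample $s$, set
\[
  C(s) := \{v \in G : v \geq s\} = \bigcap_{e \in \underline{s}} H_e^{s_e}.
\]
As an intersection of halfspaces this is convex, and it is non-empty because $s$ is realizable; hence $C(s) \in \H(G)$. Define the compressor by $\alpha(s) := a(C(s))$.

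To check that $\alpha$ is a valid compressor of size at most $\vc(G)$: since $a$ takes values in $\overline{X}(G)$, each $\alpha(s)$ is a shattered set and so $|\alpha(s)| \leq \vc(G)$. Property~(a) of a reconstructible map gives $\alpha(s) \subseteq \osc(C(s))$, so it suffices to show $\osc(C(s)) \subseteq \underline{s}$. If $e \in \osc(C(s))$, there is an edge $uv$ of $G$ with $u \in C(s)$, $v \notin C(s)$, and $u, v$ differing only in coordinate $e$. Since $u \geq s$ and $v \not\geq s$, the unique coordinate on which $u$ and $v$ disagree must be one on which $s$ is non-zero, forcing $e \in \underline{s}$.

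For the reconstructor, fix $V \in \Im(\alpha) \subseteq \Im(a)$. Property~(b) guarantees that $\bigcap\{C \in \H(G) : a(C) = V\}$ is non-empty, so I let $\beta(V)$ be any vertex in this intersection. Because $\beta(V)$ is a vertex of $G$, the scheme will be proper. The reconstruction inequality is immediate: with $V = \alpha(s) = a(C(s))$, the set $C(s)$ is one of the convex sets in the family whose intersection contains $\beta(V)$, so $\beta(V) \in C(s)$, i.e., $s \leq \beta(V) = \beta(\alpha(s))$. The proof is essentially a direct verification once the dictionary $s \mapsto C(s)$ is set up; there is no genuine obstacle in the lemma itself. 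Property~(a) is precisely what makes $\alpha(s)$ land inside $\underline{s}$, and property~(b) is precisely what supplies a single vertex of $G$ that simultaneously reconstructs every sample compressing to a given $V$. The real work — the construction of reconstructible maps on tope graphs of OMs — will happen later in the paper; the present lemma is just the bridge that reduces the compression problem to that combinatorial task.
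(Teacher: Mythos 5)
Your proof is correct and follows essentially the same route as the paper: translate a realizable sample $s$ to the convex set $C(s)=\bigcap_{e\in\underline{s}}H_e^{s_e}$, set $\alpha(s)=a(C(s))$, pick $\beta(V)$ from $\bigcap\{C:a(C)=V\}$, and verify the two compression-scheme conditions from properties (a) and (b) of the reconstructible map. Your edge-based argument for $\osc(C(s))\subseteq\underline{s}$ is a slightly more explicit spelling-out of a step the paper asserts directly, but there is no real difference in approach.
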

\begin{proof}
Assume $G$, embedded into $\{+,-\}^U$, has a reconstructible map $a$.
Let $s \in \RS(G)$ be a sample realizable by $G$ and $v$ a vertex of $G$ that realizes $s$. Define a convex set $C = \bigcap \{H_e^{s_e} \mid e \in U, s_e \in \{+,-\}\}$, i.e. $C$ is the intersection of halfspaces of $G$ defined by the coordinates of $s$. By definition, $v \in C$, hence $C$ is non-empty. Define $\alpha(s) = a(C)$. Moreover, for any $V\subseteq \Im(\alpha)$ define $\beta(V)$ to be any vertex from the intersection $\bigcap \{C \mid C\in \H(G), a(C)=V\}$, which is non-empty by the definition of the reconstructible maps.

We now prove that $\alpha, \beta$ form a proper unlabeled compression scheme of size at most $\vc(G)$. First, we argue that $\alpha(s) \subseteq \underline{s}$ for every realizable sample $s$. Let $C$ be the convex set corresponding to $s$, as defined above. By the definition of $C$, for every $e\in \osc(C)$, it must hold that $s_e \in \{+,-\}$.  Hence $\osc(C) \subseteq \underline{s}$, and since $\alpha(s) = a(C) \subseteq \osc(C)$, the condition holds.

Secondly, we argue that $s \leq \beta(\alpha(s))$ for every $s \in \RS(G)$. Let $C$ be the convex set corresponding to $s$ and let $s_e  \in \{+,-\}$ for some $e\in U$. Then by definition,  $C$ is a subset of $H_e^{s_e}$, i.e. $c_e = s_e$ for every vertex $c$ of $C$. In particular this holds for $c = \beta(\alpha(s))$, since $c$ is chosen from the intersection $\bigcap \{C \mid C\in \H(G), a(C)=V\}$. We have proven that $s_e = \beta(\alpha(s))_e$ if $s_e  \in \{+,-\}$, while if $s_e=0$, it clearly holds $s_e \leq \beta(\alpha(s))_e$. 

Finally, the size of the compression scheme is bounded by $\vc(G)$, since, by the definition of VC-dimension, the sets in $\overline{X}(G)$ are bounded by $\vc(G)$. The compression scheme is proper, since $ \beta(\alpha(s)) = c $ is a vertex of $G$.
\end{proof}

In \cite{chepoi2021labeled}, a map close to a reconstructible map was proven to exist for OMs, which was used by the authors to establish labeled compression schemes. The map does not map all convex sets but only some, that we define now. Let $\M$ be an OM, and $C\subseteq \T(X)$ a subset of topes of $\M$ that is convex in the tope graph $G(\T(\M))$. Then $C$ is said to be \emph{full}, if $\M \backslash \cross(C)$ has the same rank as $\M$. We denote with $\H_f(G)$ the set of all full convex subsets of $G=G(\T(\M))$. We restate their result in our setting:
 
\begin{lem}[Lemma 20 in \cite{chepoi2021labeled}]\label{lem:fullconv}
Let $G=G(\T(\M))$ be the tope graph of an OM $\M$. Then there exists a map 
$$b: \H_f(G) \rightarrow \overline{X}(G)$$
such that 
\begin{enumerate}[(a)]
\item For every convex set $C\in \H_f(G)$, it holds $|b(C)| = \vc(G)$.
\item For every convex set $C\in \H_f(G)$, it holds $b(C) \subseteq \osc(C)$.
\item Let $V = b(C')$ for some $C'\in \H_f(G)$ and let $s \in \{+,-\}^V$ be such that $C' \subset H_e^{s_e}$ for each $e \in V$. Then the intersection $\bigcap \big\{C\in \H_f(G) \mid b(C)=V \text{ and } C \subset H_e^{s_e}, \forall e \in V \big\}$ is non-empty.
\end{enumerate}
\end{lem}

The map fails to be a reconstructible map in two ways. Firstly, it maps only full convex sets. Additionally, it does not guarantee that the intersections of all the convex sets that map to the same set are non-empty, but only for those that lie in appropriate halfspaces. 
We will be using this map for mapping convex sets in a corner. Hence we need to prove the following.
\begin{lem}\label{lem:cornerfull}
Let $G=G(\T(\M))$ be the tope graph of an OM $\M$ and $D\subseteq \T(\M)$ a corner of $G$. Then every convex set $C$ in $G$, that is a subset of $D$, is full. Moreover, the
intersection $\bigcap \big\{C\in \H(G) \mid C\subset D, b(C)=V \big\}$ is non-empty for every $V = b(C')$ with $C'\subset D$.
\end{lem}
\begin{proof}
%
Let $\M'$ be a single-element extension of $\M$ with element $f$ in general position, such that $D$ is a corner of $\M$, say $D = \T(\M) \setminus \{X \backslash f \mid X_f = -, X \in \T(\M')\}$. First we establish the following: The OM $\mathcal{M}'\backslash e$, for every $e\in U$, is a single-element extension of $\M\backslash e$. We argue that, if $\rank(\M \backslash e) = \rank(\M)$, then the extension is in general position.  If it is not in general position, there exists a cocircuit $X$ of $\M'\backslash e$ with $X_f=0$ such that $X \backslash f$ is a cocircuit of $\M \backslash e$. But then the pre-image (with respect to the deletion of $e$) of $X$, say $X'$ in $\M'$, is such that $X'_f=0$ and $X'\backslash f$ (covector of $\M$) is the preimage of $X \backslash f$, hence a cocircuit. This cannot be, since $\M'$ is an extension in general position of $\M$.

Let $C$ be a convex set with $C\subset D$. We now prove that  $C$ is full, i.e.~$\rank(\M \backslash \cross(C)) = \rank(\M)$. Assume on the contrary that this is not the case. Then it holds for some $e_1,\ldots, e_i,e_{i+1} \in \cross(C)$, that $\rank(\M \backslash \{e_1,\ldots, e_i, e_{i+1} \})< \rank(\M \backslash \{e_1,\ldots, e_i\}) = \rank(\M)$, $0\leq i < |\cross(C)|$. By the second assertion of Claim \ref{claim:rankvc}, if $\rank(\M \backslash \{e_1,\ldots, e_i, e_{i+1} \})< \rank(\M \backslash \{e_1,\ldots, e_i\})$, the tope graph $G$ of $\M \backslash \{e_1,\ldots, e_i, e_{i+1} \}$ 
is the union of the subgraphs induced by $\T(X)$ and $\T(-X)$ and the edges between them, for some cocircuit $X$. This implies that every tope of $\M \backslash \{e_1,\ldots, e_i, e_{i+1} \}$ has a pre-image in $\T(X)$ and in $\T(-X)$. 

By the arguments from above, $\M' \backslash \{e_1,\ldots, e_i\}$ is a single-element extension in general position with $f$ of $\M \backslash \{e_1,\ldots, e_i\}$. This implies that there exists a cocircuit $X'$, with $X'_f=-$, in $\M' \backslash \{e_1,\ldots, e_i\}$ that is mapped to $X$ or $-X$, under the deletion of $f$. If $X''$ is a cocircuit of $\M'$ that maps to $X'$ under the deletion of $\{e_1,\ldots, e_i\}$, then $X''_f=-$ and $X''$ is mapped to $X$ or $-X$ under the deletion of $\{f,e_1,\ldots, e_i\}$. Then the image of $\T(X'')$ with respect to the deletion of $\{e_1,\ldots, e_i, e_{i+1}, f\}$ covers all the topes of $\M \backslash \{e_1,\ldots, e_i, e_{i+1} \}$, hence also all the topes of $\M \backslash \cross(C)$.

Now consider the image of $C$ under the deletion of $\cross(C)$, say $C'$. Since none of the elements that osculate $C$ was deleted, the topes in the preimage of $C'$, under the deletion of $\cross(C)$, are exactly the topes in $C$. On the other hand, we have proved above that there is a cocircuit $X''$ in $\M'$, with $X''_f=-$, such that $\T(X'')$ covers all the topes of $\M \backslash \cross(C)$ under the deletion, hence also $C'$. But $C\subseteq D = \T(\M) \setminus \{X \backslash f \mid X_f = -, X \in \T(\M')\}$, which is a contradiction. Thus  $\M \backslash \cross(C)$ must have the same rank as $\M$. This proves the first statement of the lemma.

%


%
%
%
%
%
%



Now let there be $C^1, C^2 \subseteq D$ such that $b(C^1) = b(C^2)$. Since $b(C^1)$ is shattered by $G$ and $|b(C^1)| = \vc(G)=\rank(\M)$, by Claim \ref{claim:rankvc}, the rank of $\M \backslash (U \setminus b(C^1))$ equals the rank of $\M$ and the tope graph of $\M \backslash (U \setminus b(C^1))$ is isomorphic to a hypercube of dimension $\vc(G)$. As proved above, then $\M' \backslash (U \setminus b(C^1))$ is a single-element extension in general position of $\M \backslash (U \setminus b(C^1))$, thus the image of the corner $D$ under the deletion must be a corner of  $\M \backslash (U \setminus b(C^1))$. Since the corners of an OM with tope graph isomorphic to a hypercube correspond to single vertices, it must be that the image of $D$ is a vertex. This implies that the image of $C^1\subseteq D$ as well as $C^2\subseteq D$ under the deletion of $U \setminus b(C^1)$ is also the beforementioned vertex, hence they are equal. In particular, for every $e\in b(C^1)$ the sets $C^1$ and $C^2$ lie in the same halfspace with respect to $e$. This means that for $s \in \{+,-\}^V$, such that $C^1 \subset H_e^{s_e}$ for each $e \in b(C^1)$, it holds $\bigcap \big\{C\in \H(G) \mid C\subset D, b(C)=b(C^1) \big\} = \bigcap \big\{C\in \H(G) \mid C\subset D, b(C)=b(C^1), C \subset H_e^{s_e}, \forall e \in b(C^1) \big\} \supset \bigcap \big\{C\in \H(G) \mid b(C)=b(C^1), C \subset H_e^{s_e}, \forall e \in b(C^1) \big\}$. By the property (c) of $b$ in Lemma \ref{lem:fullconv}, the latter is non-empty.
\end{proof}

\section{Main result}\label{sec:res}

The main result of this paper is the following.

\begin{thm}\label{thm:om_rep}
For every OM $\M$, its tope graph has a reconstructible map, thus the concept class of topes $\T(\M)$ has
a proper unlabeled compression scheme of size $\vc(\T(\M))$.
\end{thm}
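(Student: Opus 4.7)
By Lemma \ref{lem:reconstruct} it suffices to construct a reconstructible map $a : \H(G) \to \overline{X}(G)$ for $G = G(\T(\M))$. The strategy is to use oriented matroid programming to pick, for each convex set $C$, a canonical ``extreme'' cocircuit, and then use the corner and full-convex machinery of Lemmas \ref{lem:orient_corner}--\ref{lem:cornerfull} to produce a shattered set of the correct size.

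\emph{Setup and definition.} Embed $\M$ into a larger OM $\widetilde{\M}$ with two additional elements $g$ and $f$: the element $g$ is chosen so that $\M$ is the positive halfspace of the OM spanned by $U \cup \{g\}$ (of rank $\rank(\M)+1$), and $f$ is adjoined in general position so that every polyhedron $P(S_C)$ arising below is bounded in direction $f$ in the sense of Theorem \ref{thm:min} and its optimal cocircuit has non-zero $f$-coordinate. Set $\A = (\widetilde{\M}, g)$, the affine OM whose tope graph realizes $G$. For $C \in \H(G)$ with signature $S_C \in \{+,-\}^{\osc(C)}$, form $P(S_C) \subseteq \L(\A)$, let $X_C$ be the optimal cocircuit of the OM program $(\widetilde{\M}, g, f, P(S_C))$ supplied by Theorem \ref{thm:min}, and invoke Lemma \ref{lem:orient_corner} to obtain a corner $D_C$ of the tope graph $G(\T(X_C))$ with $C^\ast := P(S_C) \cap \T(X_C) \subseteq D_C$. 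The set $C^\ast$ is non-empty (extend $X_C$ to a tope of $P(S_C)$ via the composition axiom) and convex in $G(\T(X_C))$ because $\T(X_C)$ is convex in $G$. By Lemma \ref{lem:cornerfull}, $C^\ast$ is full in $G(\T(X_C))$, so Lemma \ref{lem:fullconv} supplies a shattered set $b(C^\ast)$. Because $g, f \in \underline{X_C}$ by construction, $b(C^\ast) \subseteq U$, and a rank count gives
\[
|b(C^\ast)| = \rank(\overline{\L}(X_C)) = \rank(\widetilde{\M}) - 1 = \rank(\M) = \vc(\T(\M)).
\]
Define $a(C) := b(C^\ast)$. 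Property (a), $a(C) \subseteq \osc(C)$, is immediate from Lemma \ref{lem:fullconv}(b) together with the fact that every osculating edge of $C^\ast$ in $G(\T(X_C))$ is an osculating edge of $C$ in $G$.

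\emph{Verification of (b) and main obstacle.} Suppose $a(C_1)=a(C_2)=V$. The key rigidity claim is $X_{C_1}=X_{C_2}=:X$: the shattered set $V \subseteq U$, together with the generic $f$-orientation of $\widetilde{\M}$, should uniquely determine the cocircuit of $\A$ whose $U$-zero support contains $V$ and which optimally solves its own OM program in direction $f$. Granted this, $C_1^\ast$ and $C_2^\ast$ are convex subsets of the common corner $D_X$ of $G(\T(X))$ with $b(C_1^\ast)=b(C_2^\ast)=V$, so the injectivity statement in Lemma \ref{lem:cornerfull} forces $C_1^\ast = C_2^\ast$. Any tope of this common non-empty set lies in $C_1 \cap C_2$, and the same argument applied across any family of convex sets mapping to $V$ gives property (b). The technical heart of the proof, and the main obstacle, is precisely this rigidity claim $X_{C_1}=X_{C_2}$: one needs to exploit the general position of $f$ together with the hypercube-rigidity argument from the proof of Lemma \ref{lem:cornerfull} to show that the cocircuit is uniquely determined by $V$ through the OM-programming data. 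This combinatorial rigidity, rather than any new analytic input, is what makes the map reconstructible.
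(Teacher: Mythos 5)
Your overall approach is in the spirit of the paper, but the proposal has two genuine gaps, the first of which is fatal to the ``one-shot'' structure you are aiming for.

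The more serious issue is your assumption that a single element $f$ can be adjoined in general position ``so that every polyhedron $P(S_C)$ arising below is bounded in direction $f$ \dots and its optimal cocircuit has non-zero $f$-coordinate.'' No such $f$ exists. Already for $C=\T(\M)$ (the full tope set) we have $\osc(C)=\emptyset$, so $P(S_C)$ is the entire polyhedron; this polyhedron has in-directed half-edges towards the plane at infinity, so it is not bounded in direction $f$, Theorem~\ref{thm:min} does not apply, and even heuristically the minimum of $f$ over the whole arrangement lies at infinity where $X_f=0$. The same obstruction occurs for every convex set that meets the subgraph $H_1$ (in the paper's notation: topes whose lift can have $f$-coordinate equal to $-$). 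The paper deals with exactly this boundary problem by not solving one global OM program: Proposition~\ref{prop:extend} peels a corner off $G$ to pass to an affine OM, and Proposition~\ref{prop:afine} runs an induction on rank, handling convex sets that meet $H_1$ by the reconstructible map of the lower-rank affine OM and reserving the OM-programming argument for convex sets disjoint from $H_1$, where $P(S)\subseteq H_f^+$ guarantees both boundedness and $X_f=+$. Your proposal omits this recursion entirely, so the map $a$ is undefined (or mis-defined) on a substantial family of convex sets, and the claim $f\in\underline{X_C}$ ``by construction'' is not available.

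The second issue you have already flagged yourself: the rigidity claim $X_{C_1}=X_{C_2}$ whenever $a(C_1)=a(C_2)=V$. This can actually be proved, and the paper does prove it in the verification of property (b) in Proposition~\ref{prop:afine}: since $b$ maps into $\overline{X}(G(\T(X_C)))$, $V$ is shattered by $\T(X_C)$; because $|V|=\vc(H)$ is already the full VC-dimension, there can be at most one cocircuit whose tope set shatters $V$ (two such cocircuits $X_1,X_2$ would let $\T(X_1)\cup\T(X_2)$ shatter $V\cup\{e\}$ for $e\in\Sep(X_1,X_2)$, contradicting $\vc$); and Lemma~\ref{lem:orient_corner} supplies a corner of $G(\T(X))$ that depends only on $X$, not on the polyhedron, so the same map $b$ is used for $C_1$ and $C_2$ and Lemma~\ref{lem:cornerfull} gives $C_1^\ast=C_2^\ast$. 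So this gap is fillable, but since you present it as an unproved ``main obstacle,'' the argument is incomplete as written. Note also that the uniqueness hinges on $|V|=\vc(H)$; if you retain the recursion, this is precisely why the paper splits images into those of size $<\vc$ (handled by $\overline{a}$) and those of size $=\vc$ (handled by $b$), with no collisions between the two.

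In short: the coning of $\M$ to an affine OM and the use of OM programming plus Lemmas~\ref{lem:orient_corner}--\ref{lem:cornerfull} are the right tools, and your rank count $|b(C^\ast)|=\rank(\M)=\vc(\T(\M))$ is correct, but you cannot avoid the induction on rank. Replacing the ``magical $f$'' with the paper's two-case definition (recurse into $H_1$ when $C\cap H_1\neq\emptyset$, otherwise run the OM program) and then supplying the cocircuit-uniqueness argument recovers the paper's Propositions~\ref{prop:extend} and~\ref{prop:afine}, from which the theorem follows via Lemma~\ref{lem:reconstruct}.
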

The theorem follows from Lemma \ref{lem:reconstruct} and Propositions \ref{prop:extend} and \ref{prop:afine}, given bellow, in the following way. Recall from Claim \ref{clm:chunk_iso}, that removing a corner of the tope graph $G$ of an OM $\M$ results in a graph isomorphic to the tope graph of an affine OM. Moreover, every OM has a corner, hence this operation can always be done. The resulting graph is guarantied to have a reconstructible map by Proposition \ref{prop:afine}. Then Proposition \ref{prop:extend} allows us to extend this map to a reconstructible map of $G$. This process is illustrated in Figure \ref{fig:3}. Finally, Lemma \ref{lem:reconstruct} gives a proper unlabeled compression scheme of size at most $\vc(G)$.

\begin{figure}[ht]
\centering
\begin{subfloat}[A corner of the tope graph of an OM.\label{fig:3a}]
{\includegraphics[scale=0.23]{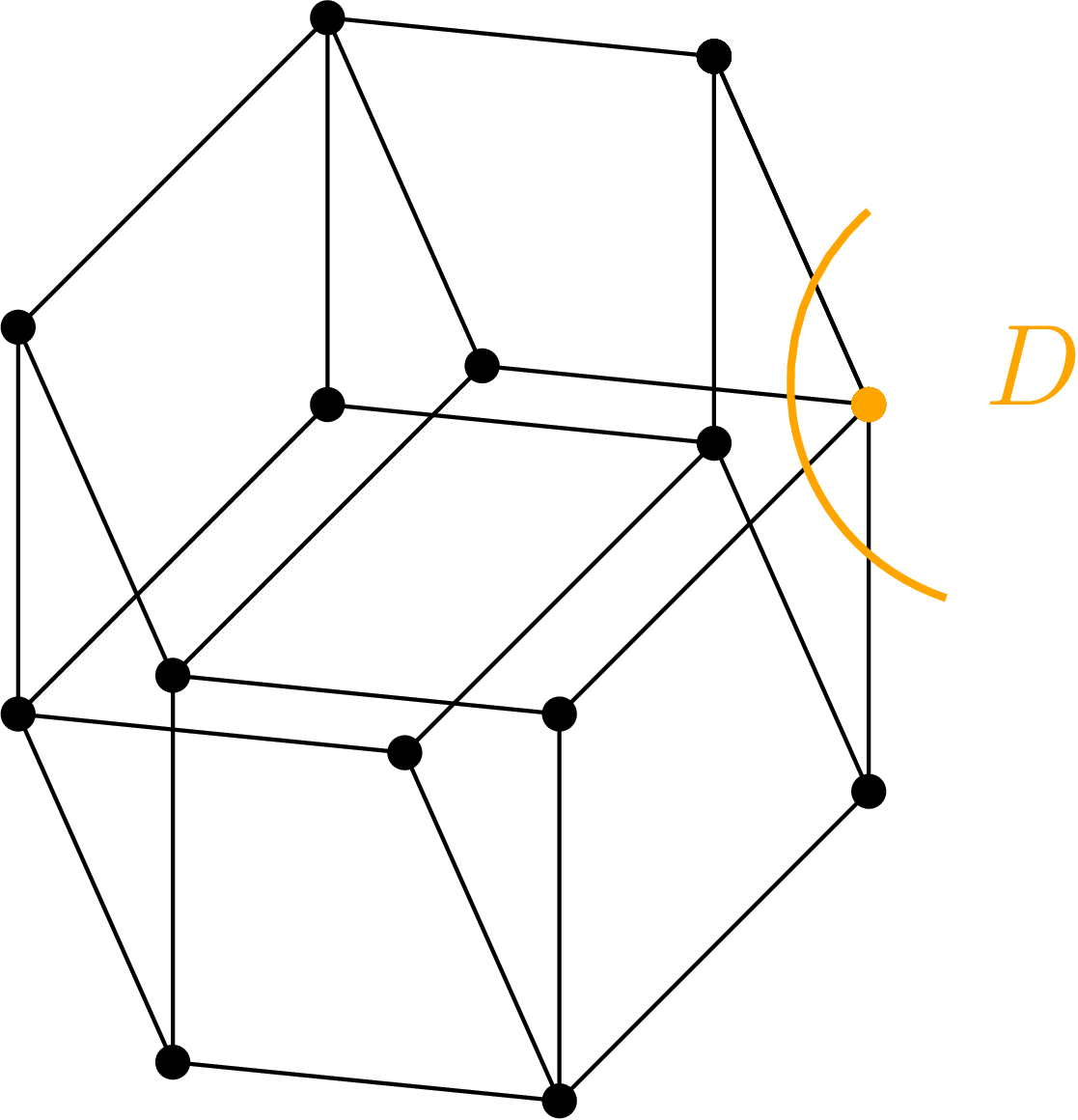}}
\end{subfloat}
\hspace{40pt}
\begin{subfloat}[To define a reconstructible map we add a hyperplane $f$ and for each convex set $C$ solve an OM program to obtain a solution $X$. The reconstructible map of $\T(X)$ helps us define the image of $C$.\label{fig:3b}]
{\includegraphics[scale=0.14]{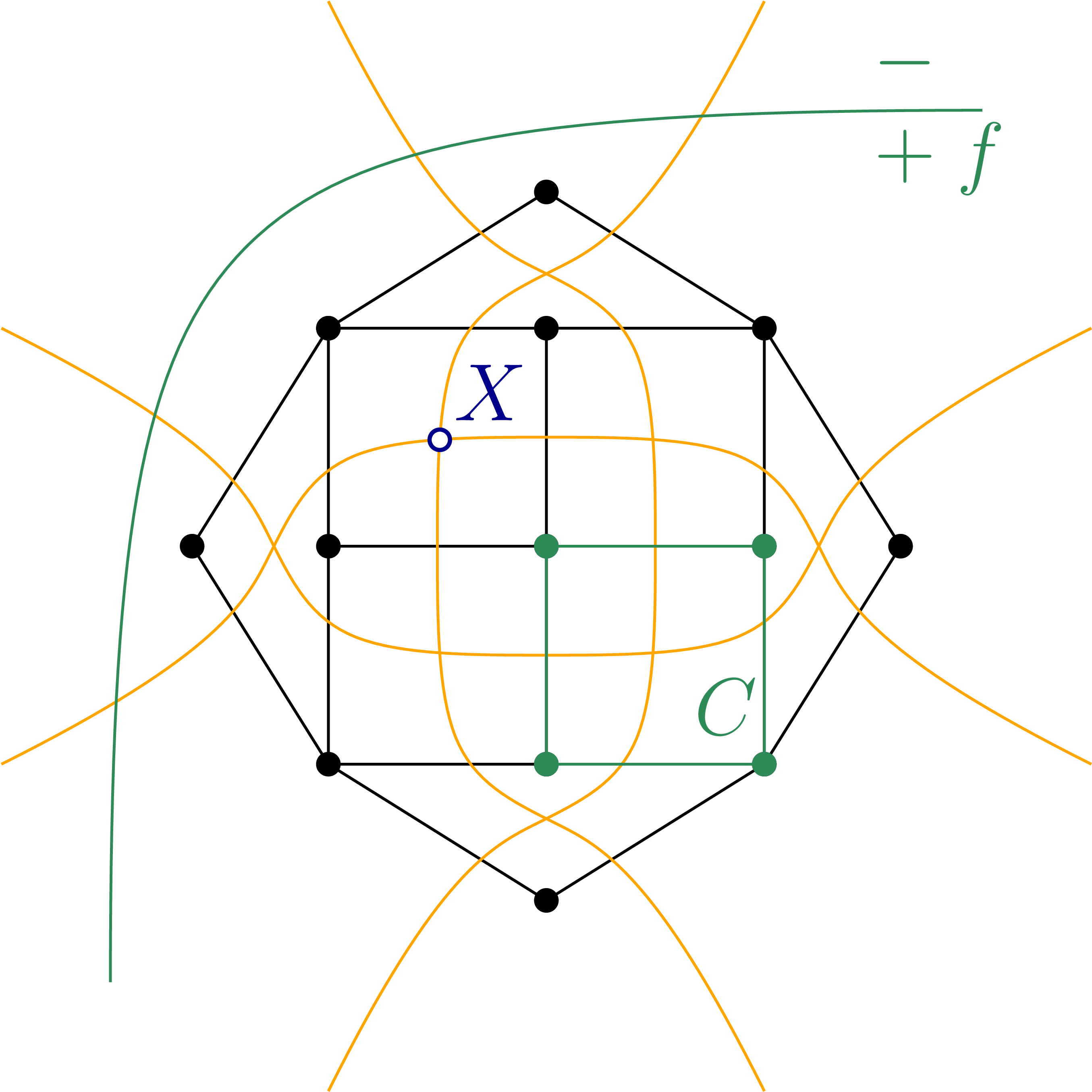}}
\end{subfloat}
\caption{Defining a reconstructible map for the tope graph of an OM: cut a corner of the tope graph of an OM as in (a), define a reconstructible map for the remaining tope graph of an affine OM by Proposition \ref{prop:afine} as in (b), and extend the map to the OM by Proposition \ref{prop:extend}.}
\label{fig:3}
\end{figure}

We now give details about the second part of the above process. If $G_1$ is an isometric subgraph of a partial cube $G$, then for every set $C$ that is convex in $G$, the intersection $C \cap G_1$ is convex in $G_1$, assuming the intersection is non-empty. If an element osculates $C \cap G_1$ in $G_1$, it osculates $C$ in $G$.  This allows us to say that a reconstructible map $a$ of $G$ \emph{extends} a reconstructible map $\overline{a}$ of $G_1$ if $a(C) = \overline{a}(C \cap G_1)$ for every convex set $C$ of $G$ with $C \cap G_1 \neq \emptyset$ (to be strictly precise, not every convex set of $G_1$ is convex in $G$, hence not all the information is used in the extension).

\begin{prop}\label{prop:extend}
Let $G=G(\T(\M))$ be the tope graph of an OM $\M$ and let $D$ be a corner of $G$. Assume that the graph $G \setminus D$ has a reconstructible map $\overline{a}$. Then $\overline{a}$ can be extended to a reconstructible map $a$ of $G$, with $|a(C)| = \vc(G)$ for every $C\subseteq D$.
\end{prop}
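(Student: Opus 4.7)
The plan is to define $a : \H(G) \to \overline{X}(G)$ by case analysis on the position of $C$ relative to the corner:
\[
a(C) = \begin{cases} b(C), & C \subseteq D, \\ \overline{a}(C \cap (G \setminus D)), & C \cap (G \setminus D) \neq \emptyset, \end{cases}
\]
where $b$ is the map supplied by Lemma~\ref{lem:fullconv}; it applies to every convex $C \subseteq D$ because Lemma~\ref{lem:cornerfull} guarantees that such $C$ is full. To see that the second rule is well-defined, I would check that $C' := C \cap (G \setminus D)$ is convex in $G \setminus D$ whenever it is non-empty: since $G \setminus D$ is isometric in $G$, every shortest $(G \setminus D)$-path between two vertices of $C'$ is a shortest path in $G$, hence lies in $C$ by convexity, and therefore in $C'$.

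Condition (a) then follows in two sub-cases. For $C \subseteq D$, Lemma~\ref{lem:fullconv}(b) gives $b(C) \subseteq \osc(C)$. For $C$ meeting $G \setminus D$, I would prove the inclusion $\osc_{G \setminus D}(C') \subseteq \osc_G(C)$: if an edge in $G \setminus D$ from $u \in C'$ to $v \in (G \setminus D) \setminus C'$ flips element $e$, then $v \notin C$ (otherwise $v \in C \cap (G \setminus D) = C'$), so that edge osculates $C$ in $G$ as well. Combined with $\overline{a}(C') \subseteq \osc_{G \setminus D}(C')$, this gives $a(C) \subseteq \osc(C)$.

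The crux is condition (b), and the crucial observation is that the two rules yield size-disjoint images: $|b(C)| = \vc(G)$ by Lemma~\ref{lem:fullconv}(a), whereas $|\overline{a}(C')| \leq \vc(G \setminus D) = \vc(G) - 1$, since $G \setminus D$ is the tope graph of an affine OM whose rank equals $\rank(\M') - 1 = \rank(\M) - 1$. Hence for $V \in \Im(a)$ with $|V| = \vc(G)$, the preimage consists of a single convex set $C \subseteq D$ by the uniqueness part of Lemma~\ref{lem:cornerfull}, and the intersection in (b) is simply $C$, which is non-empty. For $|V| < \vc(G)$, reconstructibility of $\overline{a}$ supplies a vertex $v \in \bigcap\{C' \in \H(G \setminus D) : \overline{a}(C') = V\}$; for every $C \in \H(G)$ with $a(C) = V$ the set $C \cap (G \setminus D)$ is one of those convex sets, so $v \in C \cap (G \setminus D) \subseteq C$, and $v$ lies in the desired intersection. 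The size condition $|a(C)| = \vc(G)$ for $C \subseteq D$ is immediate from the definition.

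The main obstacle I expect is condition (b) at $|V| = \vc(G)$: it hinges entirely on the size separation together with the uniqueness statement in Lemma~\ref{lem:cornerfull}; without these, distinct convex subsets of $D$ could map to the same shattered $V$ and cause the intersection to be empty. Everything else reduces to routine checks combining convexity with the isometric embedding of $G \setminus D$ in $G$.
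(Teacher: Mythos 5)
Your proof is correct and follows essentially the same route as the paper: the same two-case definition of $a$ (via $b$ from Lemma~\ref{lem:fullconv} on convex subsets of $D$, justified by Lemma~\ref{lem:cornerfull}, and via $\overline{a}$ on the isometric subgraph $G\setminus D$ otherwise), the same osculation-containment argument for condition~(a), and the same size-separation plus uniqueness argument for condition~(b). The only cosmetic difference is that you spell out a few routine checks (convexity of $C\cap(G\setminus D)$, the osculation inclusion) that the paper leaves to the reader.
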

\begin{proof}
Denote $G_1 = G \setminus D$. By the definition of a corner, there exists a single-element extension in general position $\M'$ of $\M$ with element $f$, such that $G_1$ is the subgraph of $G$ induced by $\{X \backslash f \mid X_f = +, X \in \T(\M')\}$. Since $\{X  \mid X_f = +, X \in \T(\M')\}$ are precisely the topes of the affine OM $\A=(\M,f)$, $G_1$ is in fact (isomorphic to) the tope graph of an affine OM, as stated in Claim \ref{clm:chunk_iso}. Moreover, the embedding of $G_1$ in $G$ is isometric and provided by the definition, hence we can speak of extending a reconstructible map of $G_1$ to $G$.

Due to Claim \ref{claim:rankvc}, $\vc(G_1) = \vc(G)-1$.
Let $\overline{a}: \H(G_1) \rightarrow \overline{X}(G_1)$ be the assumed reconstructible map of $G_1$. 
Define $a: \H(G) \rightarrow \overline{X}(G)$:
\begin{enumerate}[(i)]
\item If $C\in \H(G)$ is such that $C \cap G_1 \neq \emptyset$, then define $a(C) = \overline{a}(C \cap G_1)$.
\item If $C\in \H(G)$ is such that $C \subseteq D$, then define $a(C) = b(C)$, where $b$ is the map from Lemma \ref{lem:fullconv}. By Lemma \ref{lem:cornerfull}, every convex $C \subseteq D$ is full, hence the map is well defined.
\end{enumerate}

Each $V\in \Im(a)$ is in case (i), by the definition of $\overline{a}$, shattered by $G_1$ (hence also by $G$), and shattered by $G$ in case (ii), by the definition of $b$.
Now we prove that $a$ is a reconstructible map, by proving that conditions (a) and (b) from Definition \ref{def:reconst} hold. Note that the convex sets from the first condition map to sets of cardinality strictly less than $\vc(G)$, while the convex sets in the second condition map to sets of cardinality exactly $\vc(G)$.
\begin{enumerate}[(a)]
\item For every convex set $C\in \H(G)$, it holds $a(C) \subseteq \osc(C)$:

In the case that $C$ is such that $C \cap G_1 \neq \emptyset$, the condition holds, since if $e\in \osc_{G_1}(C \cap G_1)$, then $e\in \osc_G(C)$, as explained before.

In the case that $C \subseteq D$, the condition follows by the property (b) of the map $b$.
\item For every $U \in \Im(a)$, the intersection $\bigcap\{C\mid C\in \H(G), a(C)=U\}$ is non-empty.

If $|U| < r$, then  $\bigcap\{C \mid C\in \H(G), a(C)=U\} \supseteq \bigcap\{C \mid C \in \H(G_1), \overline{a}(C)=U\} \neq \emptyset$,
since $U$ is in the image of $\overline{a}$ and $\overline{a}$ is a reconstructible map.

In the case $|U| = r$, $\bigcap\{C \mid C\in \H(G), a(C)=U\} = \bigcap\{C \mid C\in \H_f(G), C\subset D, a(C)=U\}$, which is non-empty by Lemma \ref{lem:cornerfull}. Hence the property follows.
\end{enumerate}
\end{proof}

\begin{prop}\label{prop:afine}
The tope graph $H=G(\T(\A))$ of every affine OM $\A$ has a reconstructible map, thus the class of topes $\T(\A)$ has a proper unlabeled sample compression scheme of size bounded by $\vc(\T(\A))$.

\end{prop}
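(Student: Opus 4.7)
I would prove this by induction on the rank $d=\rank(\A)=\vc(H)$ of the affine OM $\A=(\M,g)$. The case $d=0$ is trivial, as $H$ is a single vertex and the empty set serves as the image of the only convex set. Assume the statement for all affine OMs of rank less than $d$, and consider $\A$ of rank $d$. The high-level idea, suggested by Figure \ref{fig:3b}, is to use an oriented matroid program to single out a canonical cocircuit for each convex set of $H$, and then invoke a reconstructible map on the lower-rank structure carved out by that cocircuit.

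Concretely, I would fix an element $f\in U\setminus\{g\}$ in general position in $\M$ (extending $\M$ by a generic element and working inside the enlarged affine OM, of which $H$ is an isometric subgraph, if no such $f$ exists). For every convex set $C=\P(S)\in\H(H)$, I apply Theorem \ref{thm:min} to the OM program $(\M,g,f,\P(S))$ to obtain an optimal cocircuit $X=X(C)$ of $\A$; arranging that $\P(S)$ is bounded in the direction of $f$ (for instance, by adding an ``at-infinity'' element on the side opposite to $f$) is a technical issue to settle first. By Lemma \ref{lem:orient_corner} there is then a corner $D_X$ of the tope graph of the OM $\overline{\L}(X)$ with $\T(X)\cap C\subseteq D_X$. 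The intersection is non-empty, since $X\circ T$ lies in $\T(X)\cap C$ for any tope $T\in C$. Now $\overline{\L}(X)$ is an OM of rank $d$, but the affine OM obtained by removing $D_X$ has rank $d-1$; hence by the inductive hypothesis and Proposition \ref{prop:extend}, the tope graph of $\overline{\L}(X)$ admits a reconstructible map $a_X$.

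I would then define
\[
a(C)=a_X(C\cap\T(X))=b_X(C\cap\T(X)),
\]
where the second equality uses case (ii) of Proposition \ref{prop:extend} together with $C\cap\T(X)\subseteq D_X$ (full in $\overline{\L}(X)$ by Lemma \ref{lem:cornerfull}), and $b_X$ is the map of Lemma \ref{lem:fullconv} for $\overline{\L}(X)$. Condition (a) of a reconstructible map is then immediate: any edge of $\T(X)$ contributing to $\osc_{\T(X)}(C\cap\T(X))$ is already an edge of $H$ osculating $C$, so $a(C)\subseteq\osc_H(C)$.

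The principal obstacle is condition (b): for every $V\in\Im(a)$ the intersection $\bigcap\{C:a(C)=V\}$ must be non-empty. When two convex sets $C_1,C_2$ with $a(C_1)=a(C_2)=V$ yield the same optimal cocircuit $X$, Lemma \ref{lem:cornerfull} forces $C_1\cap\T(X)=C_2\cap\T(X)$, which is non-empty and contained in $C_1\cap C_2$. The delicate subcase is $X(C_1)\neq X(C_2)$: here I would exploit that $V$, being shattered by $H$ and of size $d=\vc(H)$, pins down a sub-hypercube $Q_d$ inside $H$, and the canonicity of the OM program with respect to $f$ should single out a tope of this sub-cube common to all such $C$. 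Making this argument precise, together with the boundedness technicality above, is where I expect the main work of the proof to lie.
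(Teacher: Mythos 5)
Your proposal correctly identifies the key ingredients the paper uses (oriented matroid programs, Lemma~\ref{lem:orient_corner}, Lemma~\ref{lem:cornerfull}, and the map $b$ from Lemma~\ref{lem:fullconv}), but it is missing the central structural device of the paper's proof: the perturbation of $g$ to a new element $f$ and the resulting two-case split along the boundary graph $H_1$. The paper extends $\M$ by a perturbed copy $f$ of $g$, defines $H_1$ as the set of topes that have preimages on \emph{both} sides of $H_f$ in $\M'$, recognizes $H_1$ as the tope graph of the contraction $(\M'/f,g)$ of rank $d-1$, and then uses induction on $H_1$ for all convex sets $C$ with $C\cap H_1\neq\emptyset$. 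The OM program is run \emph{only} for convex sets with $C\cap H_1=\emptyset$.

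This split is not optional polish; it is what makes Theorem~\ref{thm:min} applicable. You propose running the OM program $(\M,g,f,\P(S))$ for \emph{every} convex set $C$ and flag boundedness as a ``technical issue to settle first,'' but the issue is fatal without the split. Take $C=H$: then $S$ is the empty sign vector, $\P(S)=\L$, and the program is entirely unconstrained, so there need not be an optimal cocircuit at all (and even if there were, Lemma~\ref{lem:orient_corner} would then force $\T(X)\subseteq D$, i.e.\ the whole tope set of $\overline{\L}(X)$ would be a corner of itself, which is impossible). More concretely, whenever $C$ extends to the ``plane at infinity'' in the $-f$ direction, the graph of the program has an in-directed half-edge and Theorem~\ref{thm:min} gives nothing. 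The paper's case $C\cap H_1=\emptyset$ is exactly the condition that forces $\P(S)\subseteq H_f^+$ in $\M'$, which guarantees boundedness. Your suggested repair of adding a new constraint ``at infinity opposite to $f$'' does not obviously help: the optimum might then sit on that artificial constraint rather than being a genuine cocircuit of $\A$, and you would lose the clean identification of $\T(X)$ with a sub-OM of $\T(\A)$.

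A second, smaller consequence of dropping the $H_1$ split is that your map $a$ assigns to every convex set a set of size exactly $d=\vc(H)$. Condition (a) then forces $|\osc(C)|\geq d$ for every $C$, which already fails for $C=H$ (where $\osc(H)=\emptyset$). In the paper's version, convex sets meeting $H_1$ are mapped inductively into $\overline{X}(H_1)$ and thus to sets of size at most $d-1$; only the ``interior'' convex sets are mapped to sets of size $d$, and this size separation is also what keeps the two cases of the map from colliding.

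Finally, the ``delicate subcase'' you point to for condition (b) --- different convex sets with different optimal cocircuits landing on the same $V$ --- is exactly where the paper's uniqueness argument enters: if $|V|=d$ and $\T(X_1),\T(X_2)$ both shatter $V$ for cocircuits $X_1\neq X_2$, then picking $e\in\Sep(X_1,X_2)$ shows $\T(X_1)\cup\T(X_2)$ shatters $V\cup\{e\}$, contradicting $\vc(\T(\A'))=d$. You correctly identify this as the crux but do not supply the argument; it is short but essential, and together with the $H_1$ decomposition it is what the paper's proof actually consists of.
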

\begin{proof}
Let $H$ be the tope graph of an affine OM $\A=(\M, g)$, where $\M=(U,\L)$ is an OM. We start the proof by a single-element extension of $\M$ (hence also $\A$). The so-called perturbations, see \cite[Definition 7.2.3]{bjvestwhzi-93}, allow us to extend $\M$, with an element $f$ in general position, to an OM $\M'$, so that $f$ can be seen as a slightly perturbed $g$. More precisely, the extension can be defined such that $\{X \backslash f \mid X \in \T(\M'), X_f=+\} \supseteq \{X \in \T(\M) \mid X_g=+\}=\T(\A)$, i.e.~the image of the halfspace of $\M'$ with respect to $f$ under the deletion of $f$ covers the topes of $\A$. We will now consider the affine OM $\A'=(\M',g)$, see Figure \ref{fig:3b} for an example.

Firstly, we define $H_1$ as the subgraph of $H$ induced on $\{X \backslash f \mid X \in \T(\M'), X_f=-, X_g=+\} \subseteq \T(\A')$. Since the single-element extension $\M'$ with $f$ was defined so that $\{X \backslash f \mid X \in \T(\M'), X_f=+\}$ covers $H$, every tope $X$ in $H_1$ is an image under the deletion of $X_1\in \T(M')$ with $(X_1)_f = +$ as well as of  $X_2\in \T(M')$ with $(X_2)_f = -$. Applying the (SE) axiom to $X_1$ and $X_2$ with respect to $f$ one obtains that there exists $Y$ a covector of $M'$, with $Y_f=0$, and $Y_e\neq 0$ for $e\neq f$, that maps to $X$ under the deletion of $f$.
What we have proved is that the set $H_1'= \{X  \mid X \in \L(\M'), X_f=0, X_g=+, X_e\neq 0 \text{ for } e\neq f\}$ mapped by the deletion of $f$ covers $H_1$. Moreover, applying (C) axiom in $\M'$ to elements of $H_1$ and any tope $Z$ with $Z_f=-$, one sees that, in fact, there is a one-to-one correspondence between elements of $H_1'$ and  $H_1$.

The above implies that $H_1$ is precisely the tope graph of an affine OM $\A''=(\M'/f,g)$, where $\M'/f$ is the contraction of OM, defined in Section \ref{sec:prelim}. See also \cite[Section 3.3]{bjvestwhzi-93} for more details on this operation. In fact, $\rank{\A''} = \rank(\A) - 1$, hence  we can inductively assume that $H_1$ has a reconstructible map $\overline{a}$. Since $H_1$ is an isometric subgraph of $H$ (with the embedding into a hypercube inherited from $H$), we can deduce that the intersections of the convex sets of $H$ with $H_1$ are convex in $H_1$, and we can extend $\overline{a}$ to a reconstructible map $a$ of $H$.

Define $a$ in the following way:
\begin{enumerate}[(i)]
\item If $C \subseteq H$ is such a convex set, that $C \cap H_1 \neq \emptyset$, then define $a(C) = \overline{a}(C \cap H_1)$.
\item If $C \subseteq H$ is such a convex set, that $C \cap H_1  = \emptyset$, then define $a(C)$ in the following way. First let $S\in \{+,-\}^{\osc(C)}$ be such, that $S_e=+$ if $C \subseteq H_e^+$, or  $S_e=-$ if $C \subseteq H_e^-$. Then $P(S)$ is as a polyhedron in $\M'$ (note, we consider the poyhedron in the extension of $\M$). The topes in $C'=P(S) \cap \T(\M')$ are precisely those, that map to $C$ under the deletion of $f$. Since $C \cap H_1  = \emptyset$, it holds that $Y_f=+$ for every $Y \in P(S)\cap \T(\M')$, and the map between vertices of $C'$ and $C$ is bijective.
Let a cocircuit $X$ be a solution to the OM program $(\M',g,f,P(S))$, given by Theorem \ref{thm:min}. In fact, since $C \cap H_1  = \emptyset$ and hence $P(S) \subseteq H_f^+$, the program is bounded, and furthermore $X_f=+$. By Lemma \ref{lem:orient_corner}, $P(S) \cap \T(X)$ is included in a corner $D$ of the tope graph induced by $\T(X)$. Inductively, we can assume that the tope graph induced by $\T(X) \setminus D$, which is isomorphic to the tope graph of an affine OM, has a reconstructible map. The map can be extended to a reconstructible map $b_X$ of the graph induced by $\T(X)$, by Proposition \ref{prop:extend}. In fact, $b_X$ maps convex subsets of $D$ into sets of order $\vc(\T(X))= \rank(\overline{\L}(X))=\rank(\A)=\vc(H)$, by Proposition \ref{prop:extend} and Claim \ref{claim:rankvc}. Finally, define $a(C) = b_X(C' \cap \T(X))$. Note that $C' \cap \T(X)$ is convex in the graph induced by $\T(X)$, since $C'$ and $T(X)$ are convex sets in the tope graph of $\A'$. Moreover, since $X_f=+$, all the topes in $\T(X)$ are in $H_f^+$, hence none of the convex subsets of $\T(X)$ are osculated by $f$ (in the tope graph induced by $\T(X)$). Then $f$ is not an element of the images of $b_X$, and $a$ is well defined. See Figure \ref{fig:3b} for an example.
\end{enumerate}

The VC-dimension of an affine OM is the same as the VC-dimension of the tope graphs of its cocircuits, by Claim \ref{claim:rankvc}. Hence, note that in the case (i) $|a(C)| < \vc(H)$, since $\overline{a}$ is a reconstructible map of $H_1$ with $\vc(H_1) = \vc(H) - 1$. In case (ii) $|a(C)| = \vc(H)$, as explained above, hence there are no collisions between the images in the two cases.

We now prove that $a$ is a reconstructible map. 
\begin{enumerate}[(a)]
\item For every convex set $C\in \H(H_1)$ it holds $a(C) \subseteq \osc(C)$:

First we analyze case (i), hence we assume that $C$ is such that $C \cap H_1 \neq \emptyset$. Every $e$ that osculates  $C \cap H_1$ in $H_1$ also osculates $C$ in $H$, since $H_1$ is an isometric subgraph of $H$ (having the embedding into a hypercube inherited from $H$). This implies that $a(C) = \overline{a}(C \cap H_1) \subseteq \osc_{H_1}(C \cap H_1) \subseteq \osc_H(C)$.

A similar analysis can be done in case (ii). Assume $e$ osculates  $C' \cap \T(X)$ in $\T(X)$ where $C'$ is the convex set consisting of all the topes in $\M'$ that map to $C$ and is used to define $a(C) = b_X(C' \cap T(X))$. Let us denote with $\T(X) \backslash f$ the image of $\T(X)$ under the deletion of $f$ in $\M'$, i.e.~$\T(X) \backslash f$ is a subset of $H$. Since all the topes in $\T(X)$ as well as in $C'$ are in $H_f^+$, also the image of $C'\cap \T(X)$ under the deletion of $f$ is osculated by $e$ in the subgraph induced by $\T(X) \backslash f$. We can write $(C'\cap \T(X))\backslash f= C\cap (\T(X)\backslash f)$ in $\T(X)\backslash f$. Since $\T(X)\backslash f$ induces an isometric subgraph of $H$, $e$ osculates $C$ in $H$. Hence $a(C) = b_X(C' \cap T(X)) \subseteq \osc_{\T(X)}(C' \cap \T(X)) \subseteq \osc_H(C)$.

\item For every $V \in \Im(a)$, the intersection $\bigcap\big\{C\mid C\in \H(H), a(C)=V\big\}$ is non-empty:

If $|V| < \vc(H)$, then all the convex sets that are mapped into $V$ are from the case (i), hence $\bigcap\big\{C \mid C\in \H(H), a(C)=V\big\} \supseteq \bigcap\big\{C'\mid C\in \H(H_1), \overline{a}(C')=V\big\} \neq \emptyset$, since $\overline{a}$ is a reconstructible map.

Let now $|V| = \vc(H)$. First, we claim that there is a unique cocircuit  $X$ of $\A'$, such that $\T(X)$ shatters $V$. This could be easily seen from the topological representation of AOMs and the fact that the rank of an OM matches the VC-dimension of its tope graph. Nevertheless, we prove it for the sake of completeness. Since the topes of $\A$ shatter $V$, also the topes of $\A'$ shatter it.
 Seeing the latter as a set of subsets of $U\cup \{f\}$ instead of $\{+,-,0\}^{U\cup \{f\}}$ vectors, $V$ being shattered by $\T(A')$ translates into $\A' \backslash ((U\cup \{f\})\backslash V)$ having the tope graph isomorphic to the hypercube $Q_{|V|}$. Since then $\A' \backslash ((U\cup \{f\})\backslash V)$ has the set of covectors $\L'=\{+,-,0\}^V$, there must be a cocircuit $X$ of $\A'$ that maps into $00\ldots0 \in \L'$ and the topes $\T(\A')$ must map to elements $\{+,-\}^V$ under the deletion of $(U\cup \{f\})\backslash V$. Then $X\circ Y\in \T(X)$ for every $Y\in \T(\A')$ by the Axiom (C). The image of the latter topes under the deletion covers $\{+,-\}^V$, hence $\T(X)$ shatters $V$. If there are two cocircuits $X_1,X_2$ with this property, then by a similar analysis $\T(A') \supseteq \T(X_1) \cup \T(X_2)$ would shatter $V\cup \{e\}$ for $e\in \Sep(X_1,X_2)$, which cannot be, since $\vc(\T(A'))=\rank(A')=\rank(A)=\vc(\T(A'))=|V|$.

We have proved that $V$ can be mapped to a unique cocircuit $X$ of $\A'$, hence we know that $V\in \Im(a)$ was obtained through an OM program whose solution was the cocircuit $X$. Furthermore, by Lemma \ref{lem:orient_corner}, independent of the polyhedron used in the OM program (as long as $X$ is a solution), there is a corner $D$ of the OM with topes $\T(X)$, such that $V\in \Im(b_X)$, where $b_X$ was defined using the corner $D$.
Hence, $\bigcap\big\{C \mid C\in \H(H), a(C)=V\big\}$ includes the image of $\bigcap\big\{C' \mid C'\in \H(\T(X)),~X \text{ is a solution of an OM program } (\M',g,f,P(S)), P(S)\cap \T(X)=C', b_X(C')=V\big\}$
under the deletion of $f$. Furthermore, the latter includes $\bigcap\big\{C' \mid C'\in \H(\T(X)), b_X(C')=V\big\}$.
Since $b_X$ is a reconstructible map (fixed for all the polyhedrons with solution $X$), the last intersection is non-empty. This proves that the first intersection is non-empty as well.

\end{enumerate}
Finally, every $V\in \Im(a)$ is shattered by $H$, since it is shattered by $H_1$ or by $\T(X)$ for some cocircuit $X$ of $H$.

This finishes the proof.
\end{proof}

\subsection*{Proper unlabeled sample compression schemes and corner peeling}

\emph{Complexes of Oriented Matroids} (COMs) were introduced in \cite{Ban-18} as a generalization of OMs (and affine OMs) preserving many properties without demanding to be centrally symmetric. The class of tope graphs of COMs includes many interesting classes of graphs, such as ample classes (lopsided sets) \cite{bachdrko-06}, hypercellular graphs \cite{Che-16}, median graphs \cite[Chapter 12]{ham-11}, etc.  They can be formally defined by replacing the Axiom (Sym) with a weaker one.

\begin{defi}
A \emph{complex of an oriented matroids (COM)}  is a system of sign vectors $\M=(U,\L)$ satisfying properties \textbf{(C)}, \textbf{(SE)}, and
\begin{itemize}
\item[\textbf{(FS)}] $X\circ -Y \in  \covectors$  for all $X,Y \in  \covectors$.
\end{itemize}
\end{defi}

It remains an open problem to find unlabeled sample compression schemes for ample classes, hence it is also unknown if unlabeled sample compression schemes for the tope graphs of COMs exist. One way to analyze COMs is to see them as a union of OMs glued together in a particular way, see \cite{Ban-18}. The reason for this is that, similarly as in the case of OMs, one can define $\L(X) = \{Y \in \L \mid X \leq Y\}$  and $\overline{\L}(X) := \L(X)\backslash \underline{X}$ for each covector $X\in \L$ and prove that $\overline{\L}(X)$ is an OM. Hence the set of covectors $\L$ of a COM $\M$ is a union $\L = \L(X_1) \cup \cdots \cup L(X_n)$, where $X_1,\ldots, X_n$ are minimal covectors of $\M$. We will call each $L(X_i)$ a \emph{sub-OM}. 

In \cite{knauer2020corners}, a \emph{corner} of the tope graph of a COM $G$ was defined as such a set $D\subseteq G$ with all the vertices of $D$ lying in the tope graph of a unique maximal sub-OM, of which $D$ is a corner. Equivalently, all the vertices $D$ lie in exactly one maximal convex subgraph $H$, that is the tope graph of an OM, and $D$ is a corner of $H$, see Figure \ref{fig:4}.

\begin{figure}
\centering
\includegraphics[scale=0.18]{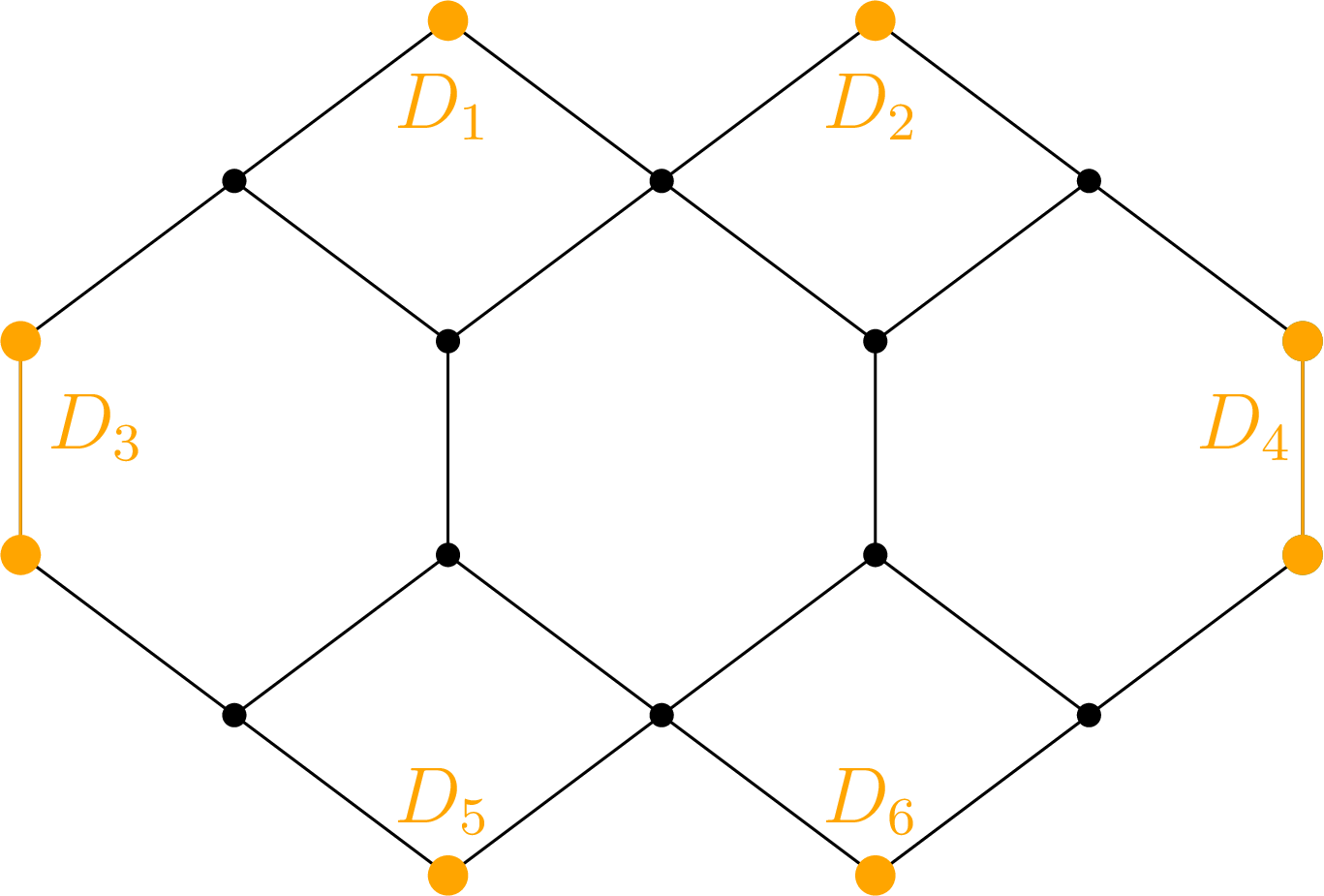}
\caption{The tope graph of a COM with VC-dimension 2 having corners $D_1,\ldots,D_6$.}
\label{fig:4}
\end{figure}

Removing a corner $D$ in a tope graph of a COM $G$ results in $G\setminus D$ also being the tope graph of a COM \cite{knauer2020corners}, where the embedding is inherited from $G$ since $G\setminus D$ is an isometric subgraph of $G$. The tope graph of a COM $G$ is said to have a \emph{corner peeling} if one can partition its vertices into $D_1,\ldots, D_k$, such that $D_i$ is a corner of $G \setminus \cup_{j=1}^{i-1} D_j$. 

We have the following corollary of Proposition \ref{prop:extend}:
\begin{cor}\label{cor:com}
Every tope graph of a COM $G$, that has a corner peeling, has a reconstructible map, thus a proper unlabeled sample compression scheme of size bounded by $\vc(G)$.
\end{cor}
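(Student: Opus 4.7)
The plan is to proceed by induction on the length $k$ of the corner peeling $D_1, \ldots, D_k$ of the COM tope graph $G$, reducing the problem to a COM analogue of Proposition~\ref{prop:extend}. The base case $k = 0$ (empty graph) is trivial. For the inductive step, set $G_1 = G \setminus D_1$; since $D_1$ is a COM-corner, $G_1$ is again the tope graph of a COM and inherits the shorter corner peeling $D_2, \ldots, D_k$, so the induction hypothesis supplies a reconstructible map $\overline{a}\colon \H(G_1) \to \overline{X}(G_1)$. Because removing a corner preserves isometric embedding, $G_1$ is isometric in $G$, and for each $C \in \H(G)$ the intersection $C \cap G_1$ is convex in $G_1$ with $\osc_{G_1}(C \cap G_1) \subseteq \osc_G(C)$.

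I would then extend $\overline{a}$ along the lines of Proposition~\ref{prop:extend}, with the unique maximal convex OM subgraph $H_1 \subseteq G$ containing $D_1$ playing the role of the ambient OM. For $C \in \H(G)$ with $C \cap G_1 \neq \emptyset$ set $a(C) = \overline{a}(C \cap G_1)$; for $C \subseteq D_1$ set $a(C) = b_{H_1}(C)$, where $b_{H_1}$ is the map from Lemma~\ref{lem:fullconv} applied to the OM $H_1$. This is well-defined because any such $C$ is convex in $H_1$ (since $H_1$ is convex in $G$) and full in $H_1$ by Lemma~\ref{lem:cornerfull}. Property~(a) then follows as in the proof of Proposition~\ref{prop:extend}: in case~(i), $a(C) \subseteq \osc_{G_1}(C \cap G_1) \subseteq \osc_G(C)$; in case~(ii), $a(C) \subseteq \osc_{H_1}(C) \subseteq \osc_G(C)$ because $H_1$ is a subgraph of $G$.

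The main obstacle is property~(b), namely the non-emptiness of $\bigcap\{C : a(C) = V\}$ for every $V \in \Im(a)$. If all preimages of $V$ lie in case~(i), reconstructibility of $\overline{a}$ yields a common point in $G_1 \subseteq G$; if all lie in case~(ii), Lemma~\ref{lem:cornerfull} applied inside $H_1$ forces a unique preimage. The delicate situation is the mixed case, where $V = b_{H_1}(C^*)$ for some $C^* \subseteq D_1$ but also equals $\overline{a}(C' \cap G_1)$ for some $C'$ meeting $G_1$. In Proposition~\ref{prop:extend} this never arises, since $|b(C)| = \vc(G)$ separates case~(ii) images from case~(i) images of cardinality at most $\vc(G_1) = \vc(G) - 1$; for COMs no such automatic size decrement holds. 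The hardest step is therefore to show that $V = b_{H_1}(C^*)$ is not shattered by $G_1$, a structural fact I would derive using the argument from the proof of Lemma~\ref{lem:cornerfull}: after deleting $U \setminus V$, the OM $H_1$ collapses to a $|V|$-cube in which $D_1$ projects to a single vertex $v^*$, so $H_1 \setminus D_1$ misses $v^*$; the uniqueness of $H_1$ as the maximal OM cell through $D_1$ together with the (FS) axiom of COMs should then rule out any tope of $G \setminus H_1$ projecting to $v^*$, so $v^*$ is absent from the projection of $G_1$ and $V$ is not shattered there. Once property~(b) is secured, Lemma~\ref{lem:reconstruct} yields the proper unlabeled sample compression scheme of size at most $\vc(G)$.
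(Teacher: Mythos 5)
Your overall scaffolding matches the paper's proof exactly: induction along the corner peeling, the same two-case definition of $a$ (via $\overline{a}$ on $C\cap G_1$ when this is nonempty, via $b_{H_1}$ on $C\subseteq D_1$ otherwise), the same verification of condition (a), and the same correct diagnosis that the whole argument hinges on showing $V=b_{H_1}(C^*)$ is not shattered by $G_1$, since for COMs the automatic size separation from Proposition~\ref{prop:extend} is unavailable. You also correctly extract from the proof of Lemma~\ref{lem:cornerfull} that after deleting $U\setminus V$ the cell $H_1$ collapses to a $|V|$-cube in which $D_1$, and hence $H_1\setminus D_1$ misses, a single vertex $v^*$, and that the non-shattering of $V$ by $G_1$ is equivalent to $v^*$ being absent from the projection of $G_1$.

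The gap is the final step: you assert that ``the uniqueness of $H_1$ as the maximal OM cell through $D_1$ together with the (FS) axiom of COMs \emph{should} then rule out any tope of $G\setminus H_1$ projecting to $v^*$'', but you do not actually derive this, and the (FS) axiom alone gives no obvious handle on it — topes far from $H_1$ could in principle agree with $v^*$ on the small coordinate set $V$, and nothing in what you have written precludes this. The paper closes this gap differently and more directly: it invokes \cite[Lemma 5.8]{knauer2020corners}, which says that any set shattered by a COM is shattered by one of its sub-OMs. Since $|V|=\vc(H_1)$ and $H_1$ is the (unique) maximal sub-OM shattering $V$, and since $H_1\setminus D_1$ has VC-dimension $<|V|$, no sub-OM of the COM $G\setminus D_1$ can shatter $V$, so $G\setminus D_1$ cannot either. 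Your sketch lacks an analogue of this structural lemma about shattering in COMs, and without it the argument does not close. To complete your version you would need either to cite that lemma, or to prove from scratch that the preimage convex set $P=\bigcap_{e\in V} H_e^{v^*_e}$ satisfies $P\subseteq H_1$ — which is precisely the kind of statement the cited lemma is designed to replace.
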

\begin{proof}
Let $D$ be a corner of $G$ and inductively assume that $G\setminus D$ has a reconstructible map $\overline{a}$. Expanding $\overline{a}$ to a reconstructible map $a$ of $G$ can be done identically as in the proof of Proposition \ref{prop:extend}, by defining $a(C) = \overline{a}(C \cap (G\setminus D))$ if $C \cap (G\setminus D)$ is non-empty and $a(C) = b(C)$ otherwise, where $b$ is given by Lemma \ref{lem:fullconv} for the tope graph  $H$ of the unique sub-OM, that includes $D$. In fact, the extension is well defined since $G\setminus D$ is an isometric subgraph of $G$ and hence for every convex set $C$ of $G$, also $C \cap (G\setminus D)$ is convex in $G\setminus D$.

The only point that needs to be proved to repeat the proof of Proposition \ref{prop:extend}  is that the image of $\overline{a}$ and the image of $b$, limited to the convex sets of $D$, do not intersect. The crucial point is that for every $V=b(C), C\subseteq D$, the set $V$ is not shattered by  $G\setminus D$, so $V\neq \overline{a}(C)$ for all $C$. It follows from \cite[Lemma 5.8]{knauer2020corners}, that if $V$ is shattered by a COM $G$, it must be shattered by a sub-OM. Since $b$ maps to sets of order equal to $\vc(H)$ and $H$ is a maximal sub-OM, it is a unique sub-OM that shatters $U$. But $H$ does not exist in $G\setminus D$, more precisely $H\setminus D$ has VC-dimension less than $|V|$.
\end{proof}

The above result generalizes the result of \cite{rubinstein2012geometric} that ample classes with corner peelings have proper unlabeled sample compression schemes bounded by their VC-dimension. In fact, our result in this particular case gives the same maps as the so-called representation maps introduced in \cite{chalopin2022unlabeled}. It is known that not all ample classes have a corner peeling, although they might still have representation or reconstructible maps. On the other hand, as a consequence of our result, the hypercellular graphs \cite{Che-16} (hence also bipartite cellular graphs \cite{bandelt1996cellular}), COMs with VC-dimension at most 2 and realizable COMs all have  proper unlabeled sample compression schemes bounded by their VC-dimension, since they were proven to have a corner peeling \cite{knauer2020corners}. Moreover, by results of \cite{chepoi2022ample}, partial cubes with VC-dimension 2 can be extended to COMs with the same VC-dimension, implying that they have (improper) unlabeled compression schemes of size 2.

\section{Acknowledgment}

The research was partially supported by ARRS projects P1-0297, N1-0095, J1-1693, N1-0218, and project SiQUID funded by the Digital Europe Programme (DIGITAL).

\bibliographystyle{my-siam}
{\bibliography{compression}}

\begin{thebibliography}{10}

\bibitem{Ban-89}
{\sc H.-J. {Bandelt}}, {\em {Graphs with intrinsic $S\sb 3$ convexities.}}, {J.
  Graph Theory}, 13 (1989), pp.~215--227.

\bibitem{bandelt1996cellular}
{\sc H.-J. Bandelt and V.~Chepoi}, {\em Cellular bipartite graphs}, European J.
  Combin., 17 (1996), pp.~121--134.

\bibitem{bachdrko-06}
{\sc H.-J. Bandelt, V.~Chepoi, A.~Dress, and J.~Koolen}, {\em Combinatorics of
  lopsided sets}, European J. Combin., 27 (2006), pp.~669--689.

\bibitem{Ban-18}
{\sc H.-J. {Bandelt}, V.~{Chepoi}, and K.~{Knauer}}, {\em {COMs: complexes of
  oriented matroids.}}, {J. Comb. Theory, Ser. A}, 156 (2018), pp.~195--237.

\bibitem{ben1998combinatorial}
{\sc S.~Ben-David and A.~Litman}, {\em Combinatorial variability of
  {V}apnik-{C}hervonenkis classes with applications to sample compression
  schemes}, Discrete Appl. Math., 86 (1998), pp.~3--25.

\bibitem{bjvestwhzi-93}
{\sc A.~Bj{\"o}rner, M.~Las~Vergnas, B.~Sturmfels, N.~White, and G.~M.
  Ziegler}, {\em Oriented matroids}, vol.~46 of Encyclopedia of Mathematics and
  its Applications, Cambridge University Press, Cambridge, {S}econd~ed., 1999.

\bibitem{chalopin2022unlabeled}
{\sc J.~Chalopin, V.~Chepoi, S.~Moran, and M.~K. Warmuth}, {\em Unlabeled
  sample compression schemes and corner peelings for ample and maximum
  classes}, J. Comput. System Sci.,  (2022), pp.~1--28.

\bibitem{Che-16}
{\sc V.~Chepoi, K.~Knauer, and T.~Marc}, {\em Hypercellular graphs: Partial
  cubes without {${Q}_3^-$} as partial cube minor}, Discrete Math.,  (2019),
  p.~111678.

\bibitem{chepoi2022ample}
{\sc V.~Chepoi, K.~Knauer, and M.~Philibert}, {\em Ample completions of
  oriented matroids and complexes of uniform oriented matroids}, SIAM J.
  Discrete Math., 36 (2022), pp.~509--535.

\bibitem{chepoi2021labeled}
{\sc V.~Chepoi, K.~Knauer, and M.~Philibert}, {\em Labeled sample compression
  schemes for complexes of oriented matroids}, arXiv preprint arXiv:2110.15168,
   (2023).

\bibitem{Djo-73}
{\sc D.~{\v{Z}}. Djokovi{\'c}}, {\em Distance-preserving subgraphs of
  hypercubes}, Journal of Combinatorial Theory, Series B, 14 (1973),
  pp.~263--267.

\bibitem{floyd1995sample}
{\sc S.~Floyd and M.~Warmuth}, {\em Sample compression, learnability, and the
  {V}apnik-{C}hervonenkis dimension}, Mach. Learn., 21 (1995), pp.~269--304.

\bibitem{ham-11}
{\sc R.~Hammack, W.~Imrich, and S.~Klav{\v{z}}ar}, {\em Handbook of product
  graphs}, Discrete Math. Appl., CRC Press, Boca Raton, 2011.

\bibitem{helmbold1990learning}
{\sc D.~Helmbold, R.~Sloan, and M.~K. Warmuth}, {\em Learning nested
  differences of intersection-closed concept classes}, Mach. Learn., 5 (1990),
  pp.~165--196.

\bibitem{knauer2020corners}
{\sc K.~Knauer and T.~Marc}, {\em Corners and simpliciality in oriented
  matroids and partial cubes}, European J. Combin., 112 (2023), p.~103714.

\bibitem{littlestone1986relating}
{\sc N.~Littlestone and M.~Warmuth}, {\em Relating data compression and
  learnability}, unpublished,  (1986).

\bibitem{Man-82}
{\sc A.~Mandel}, {\em Topology of oriented matroids}, PhD Thesis, University of
  Waterloo,  (1982).

\bibitem{moran2016labeled}
{\sc S.~Moran and M.~K. Warmuth}, {\em Labeled compression schemes for extremal
  classes}, in International Conference on Algorithmic Learning Theory,
  Springer, 2016, pp.~34--49.

\bibitem{rubinstein2012geometric}
{\sc B.~I. Rubinstein and J.~H. Rubinstein}, {\em A geometric approach to
  sample compression.}, J. Mach. Learn. Res., 13 (2012).

\bibitem{vapnik2015uniform}
{\sc V.~N. Vapnik and A.~Y. Chervonenkis}, {\em On the uniform convergence of
  relative frequencies of events to their probabilities}, in Measures of
  complexity, Springer, 2015, pp.~11--30.

\end{thebibliography}

\end{document}